\newcommand*{\mailto}[1]{\href{mailto:#1}{\nolinkurl{#1}}}
\newtheorem{theorem}{Theorem}[section]
\newtheorem{definition}[theorem]{Definition}
\newtheorem{lemma}[theorem]{Lemma}
\newtheorem{proposition}[theorem]{Proposition}
\newtheorem{corollary}[theorem]{Corollary}
\newtheorem{remark}[theorem]{Remark}
\newcommand{\B}{{\mathcal B}}
\newcommand{\R}{{\mathbb R}}
\newcommand{\N}{{\mathbb N}}
\newcommand{\C}{{\mathbb C}}
\newcommand{\E}{\mathrm{e}}
\newcommand{\sgn}{\mathrm{sgn}}
\newcommand{\re}{\mathrm{Re}}
\newcommand{\supp}{\mathrm{supp}}
\newcommand{\OO}{\mathcal{O}}
\newcommand{\oo}{o}
\newcommand{\cI}{\mathcal{I}}
\newcommand{\dip}{\upsilon}
\numberwithin{equation}{section}
\begin{document}

\title[An isospectral problem for multi-peakon solutions]{An isospectral problem for global conservative multi-peakon solutions of the Camassa--Holm equation}

\author[J.\ Eckhardt]{Jonathan Eckhardt}
\address{Institut Mittag-Leffler\\
Aurav\"agen 17\\ SE-182 60 Djursholm\\ Sweden}
\email{\mailto{jonathaneckhardt@aon.at}}

\author[A.\ Kostenko]{Aleksey Kostenko}
\address{Faculty of Mathematics\\ University of Vienna\\
Nordbergstrasse 15\\ 1090 Wien\\ Austria}
\email{\mailto{duzer80@gmail.com}; \mailto{Oleksiy.Kostenko@univie.ac.at}}

\thanks{\href{http://dx.doi.org/10.1007/s00220-014-1905-4}{Comm.\ Math.\ Phys.\ {\bf 329} (2014), no.~3, 893--918}}
\thanks{{\it Research supported by the Austrian Science Fund (FWF) under Grants No.\ Y330 and M1309 as well as by the AXA Mittag-Leffler Fellowship Project, funded by the AXA Research Fund}}

\keywords{Camassa--Holm equation, multi-peakon solutions, isospectral problem}
\subjclass[2010]{Primary 37K10, 34B07; Secondary 34B09, 37K15}

\begin{abstract}
 We introduce a generalized isospectral problem for global conservative multi-peakon solutions of the Camassa--Holm equation.  
 Utilizing the solution of the indefinite moment problem given by M.\ G.\ Krein and H.\ Langer, we show that the conservative Camassa--Holm equation is integrable by the inverse spectral transform in the multi-peakon case.
\end{abstract}

\maketitle

\section{Introduction}
 
 Over the last two decades, a lot of work has been devoted to the Cauchy problem for the Camassa--Holm equation, a nonlinear wave equation, given by 
  \begin{equation}\label{eqnCH}
   u_{t} -u_{xxt}  = 2u_x u_{xx} - 3uu_x + u u_{xxx},\quad  u|_{t=t_0} = u_0.
  \end{equation}
  For further information, we only refer to a brief selection of articles \cite{caho93, co01, coes98, como00, cost00, mc03, mc04}. 
  The Camassa--Holm equation first appeared as an abstract bi-Hamiltonian partial differential equation in an article of Fokas and Fuchssteiner \cite{fofu81}.
  However, it did not receive much attention until Camassa and Holm \cite{caho93} derived it as a nonlinear wave equation which models unidirectional wave propagation on shallow water. 
  Regarding the hydrodynamical relevance of this equation, let us also mention the more recent articles \cite{cola09, io07, jo02}, containing further information. 
  
  One of the most eminent properties of the Camassa--Holm equation lies in the fact that it is formally integrable in the sense that there is an associated Lax pair.
  The isospectral problem of this Lax pair turned out to be the weighted Sturm--Liouville equation  
 \begin{align}\label{eqnISP}
  -f''(x) + \frac{1}{4} f(x) = z\, \omega(x,t) f(x), \quad x\in\R,
 \end{align} 
 where $\omega=u - u_{xx}$ and $z\in\C$ is a complex spectral parameter. 
 Of course, (inverse) spectral theory for this Sturm--Liouville problem is of peculiar interest for solving the Cauchy problem of the Camassa--Holm equation; \cite{besasz00, be04, bebrwe08, bebrwe12, co01, cogeiv06, IsospecCH}. 
 
 A particular kind of solutions of the Camassa--Holm equation are the so-called {\em multi-peakon solutions}. 
 These are solutions of the form  
 \begin{align}\label{eqnMP}
  u(x,t) = \sum_{n=1}^N p_n(t)\, \E^{-|x-q_n(t)|}, 
 \end{align}
 where the functions on the right-hand side satisfy the following nonlinear system of ordinary differential equations: 
 \begin{align}\label{eqnMPsys}
  q_n' & = \sum_{k=1}^N p_k\, \E^{-|q_n-q_k|}, &
  p_n' & = \sum_{k=1}^N p_n p_k\, \sgn(q_n - q_k)\, \E^{-|q_n-q_k|}.
 \end{align}
 Note that the system \eqref{eqnMPsys} is Hamiltonian, that is,  
 \begin{align}\label{eqnMPsysH}
  \frac{d q_n}{dt} = \frac{\partial H(p,q)}{\partial p_n}, \qquad 
  \frac{d p_n}{dt} = - \frac{\partial H(p,q)}{\partial q_n}, 
 \end{align}
 with the Hamiltonian given by 
 \begin{align}\label{eq:H}
  H(p,q)= \frac{1}{2} \sum_{n,k=1}^N p_n p_k \, \E^{-|q_n-q_k|}=\frac{1}{4} \|u\|^2_{H^1(\R)}.
 \end{align}
 Since multi-peakon solutions \eqref{eqnMP} obviously have discontinuous first derivatives, they have to be interpreted as suitable weak solutions \cite{besasz00, coes98, como00, hora06}. 
 
 Note that the right-hand side in~\eqref{eqnMPsys} is not Lipschitz if $q_n-q_k$ is close to zero and hence in this case, one cannot get existence and uniqueness of solutions of \eqref{eqnMPsys} by using the standard arguments. 
 However, if we know in advance that all the positions stay distinct, then the right-hand side in \eqref{eqnMPsys} becomes Lipschitz and thus the Picard theorem applies. 
 In fact, the behavior of multi-peakon solutions crucially depends on whether all the heights $p_n$ of the single peaks are of the same sign or not.    
 In the first case, all the positions $q_n$ of the peaks stay distinct, move in the same direction and the system~\eqref{eqnMPsys} allows a unique global solution \cite{coes98, como00, hora06}. 
 Otherwise, some of the positions $q_n$ of the peaks will collide eventually, which causes the corresponding heights $p_n$ to blow up in finite time \cite{caho93}.
 All this happens in such a way that the solution $u$ in~\eqref{eqnMP} stays uniformly bounded in $H^1(\R)$ but its derivative develops a singularity at the points where two peaks collide. 
 
 The wave-breaking process described above is not only a peculiarity of multi-peakon solutions, but also occurs for smooth solutions \cite{co00, coes98, coes98b, mc04}.  
 In fact, the criteria for a blow-up to happen do not depend on smoothness of the initial data but only on the sign changes of $\omega$.
 For example, solutions are known only to blow up if the corresponding quantity $\omega$ is not of one sign.  
 In this case, the blow-up again happens in such a way that the solution $u$ stays uniformly bounded in $H^1(\R)$ but its derivative becomes unbounded, resembling wave-breaking. 

 However, it turned out that the encountered blow-up is not too severe and that it is always possible to continue weak solutions beyond wave-breaking \cite{xizh00}.
 In order to end up with unique continuations, one has to impose additional constraints on the solutions. 
 For example, if one requires the energy of the solutions to be conserved, one is led to the notion of {\em global conservative solutions} \cite{brco07, hora07, hora07b}.
 Although there are further possibilities to guarantee uniqueness \cite{brco07a, hora08, hora09}, the conservative case is the suitable one for our purposes.  
 For the corresponding Cauchy problem to be well-posed, it is necessary to introduce an additional quantity, which measures the energy density of the solution (as done recently in \cite{brco07, hora07}).   
 Following \cite{hora07}, {\em a global conservative solution} consists of a pair $(u,\mu)$, where $\mu$ is a non-negative Borel measure with absolutely continuous part determined by $u$ via 
 \begin{align}
  \mu_{\text{ac}}(B,t) = \int_B |u(x,t)|^2 + |u_x(x,t)|^2 dx, \quad t\in\R,
 \end{align}
 for each Borel set $B\in\B(\R)$. 
 Within this picture, blow-up of solutions corresponds to concentration of energy (measured by $\mu$) to sets of Lebesgue measure zero. 
 
 The notation we use in this article is based on \cite{hora07b}, where a detailed description of global conservative multi-peakon solutions was given (in Section~\ref{sec:mp} we will provide a brief review of this notion).  
 In particular, there \cite[Section~4]{hora07b}, the potential blow-up of the system~\eqref{eqnMPsys} was circumvented by reformulating it in Lagrangian coordinates.   
 The newly obtained system \cite[(4.1)]{hora07b} of ordinary differential equations remains globally well-defined.

 For the special case of multi-peakon solutions, the weight $\omega$ in~\eqref{eqnISP} is always a finite sum of weighted Dirac measures.
 The corresponding spectral problem~\eqref{eqnISP} is equivalent to the one for an indefinite Krein--Stieltjes string \cite[\S 13]{kakr74}.
 This connection and the solution of the corresponding inverse problem due to Krein (employing Stieltjes theory of continued fractions) has successfully been employed by Beals, Sattinger and Szmigielski \cite{besasz98, besasz00} in order to study multi-peakon solutions (in the sense of~\eqref{eqnMPsys}). 
  In particular, they noticed that in the indefinite case, the inverse problem is not always solvable within the class of spectral problems~\eqref{eqnISP}, which directly corresponds to the fact that the system~\eqref{eqnMPsys} may blow up. 
 It is the purpose of the present article to introduce a generalized isospectral problem for global conservative multi-peakon solutions of the Camassa--Holm equation. 
 Of course, an eligible spectral problem also has to incorporate the singular part of $\mu$ in some way and indeed, it turns out that the appropriate generalized spectral problem is given by 
 \begin{align}\label{eqnGISO}
  -f''(x) + \frac{1}{4} f(x) = z\, \omega(x,t) f(x) + z^2 \dip(x,t) f(x), \quad x\in\R, 
 \end{align}
 where $\dip(\,\cdot\,,t)$ denotes the singular part of $\mu(\,\cdot\,,t)$ and $z\in\C$ is a complex spectral parameter. 
 The idea for considering this particular spectral problem goes back to work of Krein and Langer \cite{krla79, krla80} (see also \cite{de97, lawi98}) on the indefinite moment problem and generalized strings which carry not only finitely many negative point masses but also dipoles. 
 
  Let us now briefly outline the content of the present article. 
  Necessary facts on global conservative multi-peakon solutions are collected in the preliminary Section~\ref{sec:mp}.
  In the following section, we will discuss the basic properties of the generalized spectral problem~\eqref{eqnGISO}. 
  Section~\ref{secIP} provides a solution of the corresponding inverse spectral problem which is essentially due to Krein and Langer \cite{krla79, krla80}.  
  Of course, the solvability of this inverse problem resembles the fact that the conservative Camassa--Holm equation has global solutions. 
  In Section~\ref{secGCMPS}, we will show that our generalized spectral problem indeed serves as an isospectral problem for the conservative Camassa--Holm equation in the multi-peakon case.
  This is done by deriving the time evolution for the spectral quantities associated with~\eqref{eqnGISO}. 
  As a consequence, we immediately obtain conserved quantities for the conservative Camassa--Holm flow.  
  In particular, {\em the conservative Camassa--Holm equation turns out to be a completely integrable Hamiltonian system in the multi-peakon case}.  
  Finally, we demonstrate our findings in Appendix~\ref{App}, using the example of a general global conservative two-peakon solution of the Camassa--Holm equation.

\section{Global conservative multi-peakon solutions}\label{sec:mp}

Instead of providing details about general global conservative solutions of the Camassa--Holm equation \cite{brco07, hora07}, we will review this notion only in the multi-peakon context. 
Therefore, we will closely follow the notation employed in~\cite{hora07b}, where a detailed description of global conservative multi-peakon solutions was given.  

\begin{definition}\label{def:mp}
 A global conservative solution $(u,\mu)$ of the Camassa--Holm equation is said to be {\em a multi-peakon solution} if for some $t_0\in\R$, the measure $\mu(\,\cdot\,,t_0)$ is absolutely continuous and  
 \begin{align}\label{eqnIVGCMP}
  u(x,t_0) =  \sum_{n=1}^{N} p_n(t_0) \, \E^{-|x-q_n(t_0)|}, \quad x\in\R,
 \end{align}
 for some $N\in\N_0$ and $p_n(t_0)$, $q_n(t_0)\in\R$ for $n=1,\ldots,N$. 
\end{definition}

More generally, one could also allow the singular part of the measure $\mu(\,\cdot\,,t_0)$ to be supported on a finite set in this definition (and still end up with the same notion).  
 However, for the following description of global conservative multi-peakon solutions, it is much more convenient to assume the measure $\mu(\,\cdot\,,t_0)$ to be absolutely continuous. 
 Furthermore, for definiteness, we will assume that the heights $p_n(t_0)$, $n=1,\ldots,N$ of the single peaks are non-zero and that their positions are strictly increasing,
 \begin{align}\label{eq:xneqx}
  -\infty < q_1(t_0) < \cdots < q_N(t_0) < \infty. 
 \end{align}
 It is a result of \cite[Section~3]{hora07b} that in this multi-peakon case, a global conservative solution $(u,\mu)$ will be of the form~\eqref{eqnMP} for all times $t\in\R$.   
 More precisely, if for each $n\in\lbrace 1,\ldots,N\rbrace$, the function $q_n$ on $\R$ denotes the characteristic through the point $q_n(t_0)$ at time $t_0$, that is, 
 \begin{align}
  q_n'(t) & = u(q_n(t),t), \quad t\in\R,  
 \end{align} 
 (cf.\ \eqref{eqnMPsys}), then the function $u$ may be written as
 \begin{align}\label{eqnuGCMP}
  u(x,t) = \sum_{n=1}^N p_n(t) \, \E^{-|x-q_n(t)|}, \quad x,\, t\in\R, 
 \end{align}
 for some $p_n(t)\in\R$, $n=1,\ldots,N$.
 Hereby note that for each $n\in\lbrace 1,\ldots,N\rbrace$, the height $p_n(t)$ is uniquely determined by $u(\,\cdot\,,t)$ unless the corresponding characteristic $q_n$ coincides with another one at time $t$.
 These encounters happen in such a way that not more than two characteristics collide at the same time and space; see \cite[Proposition~3.2]{hora07b}.
 Moreover, since the ordering of the characteristics is preserved by the flow (see for example \cite[Theorem~3.4]{hora07b}), 
 that is, 
 \begin{align}\label{eqnCharOrd}
  - \infty < q_1(t) \leq \cdots \leq q_N(t) < \infty, \quad t\in\R,
 \end{align}
 only adjacent characteristics may coincide. 
 
 \begin{lemma}\label{lem:w_n}
 Assume that all characteristics are distinct at time $t\in\R$. Then 
 \begin{align}\label{eq:w=ju}
 \begin{pmatrix} 
  p_1(t) \\ p_2(t) \\ p_3(t) \\ \vdots \\ p_N(t) 
 \end{pmatrix}
  = \begin{pmatrix}
 a_1(t) & b_1(t) & 0 & \dots & 0\\
 b_{1}(t) & a_{2}(t) & b_{2}(t) & \dots & 0\\
 0 & b_{2}(t) & a_{3}(t) &  \dots & 0\\
 \vdots & \vdots &  \vdots & \ddots & \vdots\\
 0 & 0 & 0  & \dots & a_{N}(t)
 \end{pmatrix} 
 \begin{pmatrix} 
  u(q_1(t),t) \\ u(q_2(t),t) \\ u(q_3(t),t) \\ \vdots \\ u(q_N(t),t) 
 \end{pmatrix},
 \end{align}
 where 
 \begin{align*}
  a_{n}(t) & = \frac{1}{2} \frac{\sinh\left(q_{n+1}(t)-q_{n-1}(t)\right)}{\sinh\left(q_{n+1}(t)-q_n(t)\right)\sinh\left(q_{n}(t)-q_{n-1}(t)\right)}, & n & =1,\dots, N,\\ 
  b_{n}(t) & = \frac{1}{2} \frac{-1}{\sinh\left(q_{n+1}(t)-q_n(t)\right)}, & n & =1,\dots,N-1. 
 \end{align*}
 Hereby, we set $q_0(t)=-\infty$ and $q_{N+1}(t)=+\infty$ for notational simplicity. 
 \end{lemma}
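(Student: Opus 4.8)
The plan is to read the lemma as a statement about inverting a single matrix. Evaluating the representation \eqref{eqnuGCMP} at the peak positions and using \eqref{eqnCharOrd} to resolve the absolute values, one obtains $u(q_j(t),t)=\sum_{n=1}^N \E^{-|q_j(t)-q_n(t)|}\,p_n(t)$ for $j=1,\dots,N$; that is, the column vector on the right-hand side of \eqref{eq:w=ju} arises from the heights through the symmetric matrix $M(t)$ with entries $M_{jn}(t)=\E^{-|q_j(t)-q_n(t)|}$. Thus the assertion \eqref{eq:w=ju} is precisely the claim that the stated tridiagonal matrix equals $M(t)^{-1}$, and it suffices to verify this.

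I would establish the inverse conceptually rather than by a brute-force matrix product, since the tridiagonal shape then becomes transparent. The point is that $u(\,\cdot\,,t)$ in \eqref{eqnuGCMP} solves the homogeneous equation $-f''+f=0$ on each of the open intervals $(q_n(t),q_{n+1}(t))$ (as well as on the two unbounded complementary intervals), it decays as $x\to\pm\infty$, and at every peak its derivative jumps according to $u'(q_n(t)^+,t)-u'(q_n(t)^-,t)=-2p_n(t)$. On a bounded interval $(q_n(t),q_{n+1}(t))$ the unique solution of $-f''+f=0$ taking the prescribed boundary values $u(q_n(t),t)$ and $u(q_{n+1}(t),t)$ can be written explicitly in terms of $\sinh$, and on the two unbounded intervals the decaying solution is forced. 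Differentiating these explicit expressions at the endpoints yields the one-sided derivatives $u'(q_n(t)^\pm,t)$ as linear combinations of the adjacent values $u(q_{n-1}(t),t)$, $u(q_n(t),t)$, $u(q_{n+1}(t),t)$.

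Substituting into the jump relation then expresses each height $p_n(t)$ through only these three adjacent values, which is exactly the source of the tridiagonal structure: $p_n$ feels only the jump at its own peak, and that jump involves the solution on the two adjoining intervals alone. A short simplification, resting on the identity $\coth\alpha+\coth\beta=\sinh(\alpha+\beta)/(\sinh\alpha\,\sinh\beta)$ with $\alpha=q_{n+1}(t)-q_n(t)$ and $\beta=q_n(t)-q_{n-1}(t)$, turns the coefficient of $u(q_n(t),t)$ into $a_n(t)$ and the coefficients of the two neighbours into $b_n(t)$ and $b_{n-1}(t)$. The boundary cases $n=1$ and $n=N$ are recovered by letting the missing interval become unbounded, i.e.\ by the convention $q_0(t)=-\infty$, $q_{N+1}(t)=+\infty$ (so that $1/\sinh\to 0$ and $\coth\to 1$), which reproduces the stated limiting forms of $a_1(t)$ and $a_N(t)$.

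I expect no serious obstacle beyond bookkeeping: the delicate points are fixing the sign in the jump condition $u'(q_n^+)-u'(q_n^-)=-2p_n$ correctly and handling the two endpoints uniformly via the decaying boundary solutions. As an alternative, one can bypass the ODE picture entirely and simply verify that the product of the stated tridiagonal matrix with $M(t)$ is the identity; this reduces to the elementary identities $\cosh x-\E^{-x}=\sinh x$ and $\sinh(\alpha+\beta)=\sinh\alpha\cosh\beta+\cosh\alpha\sinh\beta$, with the off-diagonal entries vanishing because $\E^{-|q_j-q_n|}$ factorizes as $\E^{\pm q_j}\E^{\mp q_n}$ once $j$ and $n$ are separated by at least one peak.
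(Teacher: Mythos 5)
Your proposal is correct, and its skeleton is the same as the paper's: evaluate \eqref{eqnuGCMP} at the peak positions to get $\left(u(q_i(t),t)\right)_i = E(t)\left(p_i(t)\right)_i$ with $E(t)=\left(\E^{-|q_j(t)-q_i(t)|}\right)_{i,j}$, and then identify the stated tridiagonal matrix as $E(t)^{-1}$. The difference is in how that inversion is established: the paper simply asserts it, remarking that it "can also be checked by a direct computation," whereas you actually prove it, via the observation that $u(\,\cdot\,,t)$ solves $-f''+f=0$ off the peaks, decays at $\pm\infty$, and has derivative jumps $u_x(q_n+,t)-u_x(q_n-,t)=-2p_n(t)$, so that each $p_n(t)$ is a combination of the three adjacent nodal values $u(q_{n-1}(t),t)$, $u(q_n(t),t)$, $u(q_{n+1}(t),t)$; the identity $\coth\alpha+\coth\beta=\sinh(\alpha+\beta)/(\sinh\alpha\sinh\beta)$ then produces exactly $a_n(t)$ and $b_n(t)$, and the conventions $q_0=-\infty$, $q_{N+1}=+\infty$ handle the edge rows (I checked the signs and the limiting cases; they come out right). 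Your route buys an explanation of \emph{why} the inverse is tridiagonal — it is the discrete Green's-function/jump-condition structure of the operator $-d^2/dx^2+1$ — at the cost of some bookkeeping; the paper's route is shorter but leaves the key step opaque. Your fallback (multiplying out $E(t)$ against the tridiagonal matrix using $\sinh(\alpha+\beta)=\sinh\alpha\cosh\beta+\cosh\alpha\sinh\beta$ and the factorization of $\E^{-|q_j-q_n|}$ for well-separated indices) is precisely the direct computation the paper alludes to.
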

 
 \begin{proof}
 It clearly follows from  \eqref{eqnuGCMP} that
  \begin{align*}
 \left(u(q_i(t),t)\right)_{i=1}^N = E(t) \left(p_i(t)\right)_{i=1}^N,\quad\text{with}\quad  
 E(t)=\left(\E^{-|q_j(t)-q_i(t)|}\right)_{i,j=1}^N.
 \end{align*}
 To complete the proof, it suffices to notice that the inverse of $E(t)$ is the tri-diagonal matrix in the claim (this can also be checked by a direct computation).
 \end{proof}
 
 We will denote the (closed) set of all times $t\in\R$ for which some characteristics coincide with $\Gamma$.  
 Upon considering the second derivative of the functions $q_n$ at a time $t\in\Gamma$ (employing \cite[(4.1)]{hora07b}), one infers that $\Gamma$ is a discrete set, consisting of isolated points.   
 As a consequence of Lemma~\ref{lem:w_n}, one sees that the heights $p_n$ are continuous away from $\Gamma$ (cf.\ \cite[(4.1)]{hora07b}). 
 In fact, the heights $p_n$ and positions $q_n$ of the single peaks, defined in~\eqref{eqnuGCMP}, even satisfy the system~\eqref{eqnMPsys} on $\R\backslash\Gamma$; \cite[Lemma~2.2]{hora06}.  
 This means that for times away from $\Gamma$, the solution $u$ is actually a classical multi-peakon solution. 
 In this respect, let us also recall that the measures $\mu(\,\cdot\,,t)$ are absolutely continuous (and hence uniquely determined by $u(\,\cdot\,,t)$) as long as $t\not\in\Gamma$. 
 More precisely, it has been proven in \cite[Proposition~3.2]{hora07b} that $\mu(\,\cdot\,,t)$ admits a singular part only when collisions occur.
 In this case, the singular part $\mu_{\text{s}}(\,\cdot\,,t)$ is supported on all points where peaks collide.
  
 Since classical multi-peakon solutions are fairly well understood, we are left to describe the global conservative solution $(u,\mu)$ at some time $t^\times\in\Gamma$ of collision. 
 We will do this in terms of limits of the quantities $p_n(t)$ and $q_n(t)$ (which are at least well-defined in a small vicinity of $t^\times$) as $t\rightarrow t^\times$. 
 First of all, if the characteristic $q_n(t^\times)$ is distinct from all the other ones for some $n\in\lbrace 1,\ldots,N\rbrace$, then $p_n(t^\times)$ is still uniquely determined by $u(\,\cdot\,,t^\times)$. 
 Moreover, it turns out that $p_n$ is continuous on the set $\lbrace t\in\R \,|\, q_{n-1}(t) < q_{n}(t) < q_{n+1}(t) \rbrace$ indeed and hence
 \begin{align}
  p_n(t^\times) = \lim_{t\rightarrow t^\times} p_n(t). 
 \end{align} 
 Of course, the more interesting case is when two characteristics coincide. 
 Therefore, suppose that we have $q_n(t^\times) = q_{n+1}(t^\times)$ for some $n=1,\ldots,N-1$.
 In this case, the corresponding heights $p_n(t)$ and $p_{n+1}(t)$ will blow up as $t\rightarrow t^\times$ 
 and the two heights $p_n(t^\times)$ and $p_{n+1}(t^\times)$ are not well-defined by~\eqref{eqnuGCMP} anymore. 
 However, their sum $p_n(t^\times)+p_{n+1}(t^\times)$ is uniquely determined by $u(\,\cdot\,,t)$ and can readily be written down in a similar way as in Lemma~\ref{lem:w_n}. 
 Moreover, some amount of the energy of the solution will concentrate in the point of collision, which causes $\mu(\,\cdot\,,t^\times)$ to have positive mass at this point. 
 The following lemma describes all these quantities in terms of limits of the heights $p_n(t)$ and positions $q_n(t)$ of the single peaks as $t\rightarrow t^\times$. 
 
 \begin{lemma}\label{lem:2.3}
 Assume that $q_n(t^\times) = q_{n+1}(t^\times)$ for some $n=1,\ldots,N-1$. Then 
 \begin{align}\label{eqnCo1}
   p_n(t^\times) + p_{n+1}(t^\times) = \lim_{t\rightarrow t^\times} p_n(t)+ p_{n+1}(t) 
 \end{align}
 and the mass of $\mu(\,\cdot\,,t^\times)$ at the point of collision is given by  
 \begin{align}\label{eqnCo2}
  \mu(\lbrace q_n(t^\times) \rbrace, t^\times) = \lim_{t\rightarrow t^\times} 4\, p_n(t) p_{n+1}(t) (q_{n}(t) - q_{n+1}(t)).
 \end{align} 
 \end{lemma}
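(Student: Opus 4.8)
The plan for \eqref{eqnCo1} is to read $p_n+p_{n+1}$ directly off Lemma~\ref{lem:w_n} and to show that the individually singular contributions combine to a finite limit. Writing $p_n = b_{n-1}u(q_{n-1})+a_n u(q_n)+b_n u(q_{n+1})$ and $p_{n+1}=b_n u(q_n)+a_{n+1}u(q_{n+1})+b_{n+1}u(q_{n+2})$, the only coefficients that blow up as $q_{n+1}-q_n\to0$ are $a_n,a_{n+1}$ and $b_n$, all behaving like $\pm\tfrac{1}{2\sinh(q_{n+1}-q_n)}$. In the sum, the coefficient of $u(q_n)$ is $a_n+b_n$ and that of $u(q_{n+1})$ is $a_{n+1}+b_n$; a short simplification using $\sinh A-\sinh B=2\cosh\tfrac{A+B}{2}\sinh\tfrac{A-B}{2}$ shows that both have finite limits (for instance $a_n+b_n\to\tfrac12\coth(q_*-q_{n-1})$, with $q_*:=q_n(t^\times)=q_{n+1}(t^\times)$). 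Since $u(\,\cdot\,,t)\to u(\,\cdot\,,t^\times)$ uniformly and all positions converge, the limit $\lim_{t\to t^\times}(p_n(t)+p_{n+1}(t))$ exists and equals the corresponding expression for the merged $(N-1)$--peakon, which is precisely $p_n(t^\times)+p_{n+1}(t^\times)$.

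For \eqref{eqnCo2} I would invoke conservation of the total energy. For conservative solutions the total mass $\mu(\R,t)$ is constant in time and, away from $\Gamma$, equals $\|u(\,\cdot\,,t)\|_{H^1}^2=2\,S(t)$, where $S(t):=\sum_{j,k}p_j(t)p_k(t)\,\E^{-|q_j(t)-q_k(t)|}$. At the collision time the absolutely continuous part is the $H^1$--energy $2\,S^\times$ of the merged peakon $u(\,\cdot\,,t^\times)$, while the singular part is concentrated at the collision points. Localizing at $q_*$, this yields $\mu(\lbrace q_*\rbrace,t^\times)=2\lim_{t\to t^\times}\bigl(S(t)-S^\times\bigr)$, so the task reduces to computing this limit.

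I would split $S(t)$ according to whether the summation indices lie in $\lbrace n,n+1\rbrace$ or in $\mathcal O:=\lbrace 1,\dots,N\rbrace\setminus\lbrace n,n+1\rbrace$, writing $S=S_{OO}+2S_{OC}+S_{CC}$, and split $S^\times$ analogously. The block $S_{OO}$ involves only the regular peaks and converges to $S_{OO}^\times$ by continuity. For the mixed block $S_{OC}=\sum_{j\in\mathcal O}p_j\bigl(p_n\E^{-|q_j-q_n|}+p_{n+1}\E^{-|q_j-q_{n+1}|}\bigr)$, the leading behaviour $p_n\sim\tfrac{u(q_n)-u(q_{n+1})}{2\sinh(q_{n+1}-q_n)}\sim-p_{n+1}$ from Lemma~\ref{lem:w_n} gives $p_n\,(q_{n+1}-q_n)\to0$ and $p_{n+1}\,(q_{n+1}-q_n)\to0$, whence $S_{OC}\to S_{OC}^\times$. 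Only the self--block survives: with $\delta:=q_{n+1}-q_n$ and $P^\times:=p_n(t^\times)+p_{n+1}(t^\times)$,
\begin{align*}
 S_{CC}-S_{CC}^\times = \bigl((p_n+p_{n+1})^2-(P^\times)^2\bigr)-2\,p_n p_{n+1}\bigl(1-\E^{-\delta}\bigr).
\end{align*}
The first bracket vanishes in the limit by \eqref{eqnCo1}, and since $(1-\E^{-\delta})/\delta\to1$ the second term converges to $2\lim p_n p_{n+1}\,\delta$. Collecting everything, $\mu(\lbrace q_*\rbrace,t^\times)=2\lim(S(t)-S^\times)=-4\lim p_n p_{n+1}\,(q_{n+1}-q_n)=4\lim p_n p_{n+1}\,(q_n-q_{n+1})$, as claimed; in particular the limit on the right exists because the mass on the left does.

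The main obstacle lies in the bookkeeping of the last two paragraphs: one must track exactly which products of the blowing--up heights survive. The estimates $p_n\delta,p_{n+1}\delta\to0$ are what render the mixed block $S_{OC}$ regular, yet $p_n p_{n+1}\delta$ need \emph{not} tend to zero, and indeed its a priori unclear limit is precisely the concentrated mass. Establishing that this limit exists is the genuinely delicate point; I would either invoke the Lagrangian description \cite[(4.1)]{hora07b}, which controls the blow--up rates directly, or, as above, deduce its existence a posteriori from the fact that $\mu(\lbrace q_*\rbrace,t^\times)$ is well defined. A secondary point is the localization when several pairs collide at the same time $t^\times$: one checks that the cross terms between different colliding pairs vanish in the limit, so that each collision point receives exactly the self--block contribution computed above.
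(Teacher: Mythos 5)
Your argument for \eqref{eqnCo1} is correct and is essentially the same cancellation as in the paper, just in different clothing: the paper writes $2(p_n+p_{n+1})$ in terms of the one-sided derivatives $u_x(q_n\pm,t)$, $u_x(q_{n+1}\pm,t)$ and shows the singular contributions cancel, while you read the heights off the tridiagonal inverse of Lemma~\ref{lem:w_n} and check that $a_n+b_n$ and $a_{n+1}+b_n$ stay bounded (your $\coth$ limits are right, and they do recombine into the coefficient $\tfrac12\bigl(\coth(q_*-q_{n-1})+\coth(q_{n+2}-q_*)\bigr)$ of the merged configuration). For \eqref{eqnCo2}, however, you take a genuinely different route. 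The paper's proof is \emph{local}: it invokes the formula (4.8) of Holden--Raynaud \cite{hora07b}, which says that the atom of $\mu(\,\cdot\,,t^\times)$ at the collision point is $\lim_{t\to t^\times}\int_{q_n(t)}^{q_{n+1}(t)}u^2+u_x^2\,dr$, then integrates by parts (using $u''=u$ between peaks) and converts the boundary terms via the same asymptotics you use. You instead use \emph{global} energy conservation $\mu(\R,t)=\mathrm{const}$ together with $\|u\|_{H^1(\R)}^2=2\sum_{j,k}p_jp_k\E^{-|q_j-q_k|}$, and extract the atom as $2\lim(S(t)-S^\times)$. Your estimates $p_n\delta,\,p_{n+1}\delta\to 0$ (from Lemma~\ref{lem:w_n} and continuity of $u$) are correct, the block bookkeeping is correct, and the a posteriori existence of $\lim p_np_{n+1}\delta$ is sound. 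When exactly one pair collides at time $t^\times$, your proof is complete.

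The genuine gap is the simultaneous-collision case, and your proposed fix does not close it. If two or more disjoint pairs collide at the same time $t^\times$ (which can happen: two far-apart peakon--antipeakon pairs can be tuned to collide simultaneously, at distinct spatial points), global conservation only yields
\begin{align*}
 \sum_{c}\Bigl(S_{CC,c}(t)-S^\times_{CC,c}\Bigr)\;\longrightarrow\;\mu_{\text{s}}(\R,t^\times)=\sum_c \mu(\lbrace q_{*c}\rbrace,t^\times),
\end{align*}
i.e.\ an identity for the \emph{sum} over collision points. Your remark that the cross terms between different colliding pairs behave well is true, but beside the point: once the cross terms are disposed of, one still knows only that the sum of the self-blocks converges to the total singular mass --- neither that each self-block converges individually, nor that its limit equals the atom sitting at that particular point. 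Attributing the correct mass to each collision point is precisely the localization that \cite{hora07b}, equation (4.8), supplies and that a global energy balance cannot. To repair the argument in full generality you need a local input: either that formula (which is the paper's route), or a local energy balance for $\mu$ in a space-time neighbourhood of a single collision point, or the explicit blow-up rates from the Lagrangian system (4.1) of \cite{hora07b} --- an alternative you mention but do not carry out.
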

 
 \begin{proof}
  Using \eqref{eqnuGCMP}, for all $t\not=t^\times$ in a small vicinity of $t^\times$ we have 
 \begin{align}
\begin{split}\label{eq:p+p}
  2 (p_{n+1}(t) + p_n(t)) & =  u_x(q_{n+1}(t)-,t)  - u_x(q_n(t)+,t) \\ 
                          & \qquad\qquad\qquad\qquad + u_x(q_n(t)-,t) - u_x(q_{n+1}(t)+,t).
 \end{split}
 \end{align} 
 The sum of the two terms in the upper line is seen to converge to zero as $t\rightarrow t^\times$ upon noting that   
 \begin{align*} 
  u_x(q_n(t)+,t) & =  \frac{u(q_n(t),t) (1+\E^{2(q_{n+1}(t)-q_n(t))}) - 2 u(q_{n+1}(t),t) \E^{q_{n+1}(t)-q_n(t)}}{1-\E^{2(q_{n+1}(t)-q_n(t))}}, \\
  u_x(q_{n+1}(t)-,t) & = \frac{2 u(q_{n}(t),t) \E^{q_{n+1}(t)-q_n(t)} - u(q_{n+1}(t),t) (1+\E^{2(q_{n+1}(t)-q_n(t))})}{1-\E^{2(q_{n+1}(t)-q_n(t))}}.
 \end{align*}
 Using similar representations for $u_x(q_n(t)-,t)$ and $u_x(q_{n+1}(t)+,t)$, we see that the two terms in the lower line of~\eqref{eq:p+p} are continuous near $t^\times$ and hence their sum converges to $2(p_{n+1}(t^\times) + p_n(t^\times))$ as $t\rightarrow t^\times$.  

 In order to prove the second claim, we use \cite[equation~(4.8)]{hora07b} and integrate by parts to obtain 
  \begin{align*}
  \mu(\lbrace q_n(t^\times) \rbrace, t^\times) & = \lim_{t\rightarrow t^\times} \int_{q_n(t)}^{q_{n+1}(t)} u(r,t)^2 + u_x(r,t)^2 dr \\
                                               & = \lim_{t\rightarrow t^\times} u_x(q_{n+1}(t)-,t) u(q_{n+1}(t),t) - u_x(q_n(t)+,t) u(q_n(t),t) \\
                                               & = \lim_{t\rightarrow t^\times} u_x(q_n(t)+,t) ( u(q_{n+1}(t),t) - u(q_n(t),t) ). 
  \end{align*}  
  Taking into account the relation (employing equation for $u_x(q_{n+1}(t)-,t)$ above)  
  \begin{align*}
   (q_{n+1}(t) - q_n(t)) u_x(q_{n+1}(t)-,t) \sim u(q_{n+1}(t),t) - u(q_n(t),t)   
  \end{align*}
  as $t\rightarrow t^\times$, one furthermore gets  
  \begin{align*}
  \mu(\lbrace q_n(t^\times) \rbrace, t^\times) 
                                                 & = \lim_{t\rightarrow t^\times} (q_{n+1}(t)-q_n(t)) u_x(q_n(t)+,t) u_x(q_{n+1}(t)-,t) \\
                                                 & = \lim_{t\rightarrow t^\times} 4\, p_n(t) p_{n+1}(t) (q_n(t) - q_{n+1}(t)). 
  \end{align*}
\end{proof}

  \section{The generalized spectral problem}\label{s1}

In order to introduce our generalized spectral problem, fix some $N\in\N_0$ and let $x_1,\ldots,x_N\in\R$ be strictly increasing. 
 Moreover, for each $n\in\lbrace 1,\ldots,N\rbrace$, let $\omega_n\in\R$, $\dip_n\geq 0$ and consider the finite discrete measures 
 \begin{align}\label{eqnMEA}
 \omega = \sum_{n=1}^N \omega_n \delta_{x_n} \quad\text{and}\quad \dip = \sum_{n=1}^N \dip_n \delta_{x_n},
\end{align}
 where $\delta_{x_n}$ is the Dirac measure at $x_n$. 
 For definiteness, we will also assume that $\supp(|\omega|+\dip) = \lbrace x_1,\ldots,x_N\rbrace$, that is, $|\omega_n|+\dip_n>0$ for $n=1,\ldots,N$.  
 In this section we will consider the spectral problem
\begin{align}\label{eqnDE}
 -f''(x) + \frac{1}{4} f(x) = z\, \omega(x) f(x) + z^2 \dip(x) f(x), \quad x\in\R,
\end{align}
with a complex spectral parameter $z\in\C$. 
 Of course, since $\omega$ and $\dip$ are measures, this equation has to be understood in a distributional sense (the right-hand side is a measure if $f$ is continuous).
 More precisely, some function $f$ is a solution of the differential equation~\eqref{eqnDE} if it satisfies 
 \begin{align}\label{eqnDEdiscr01}
   -f''(x) + \frac{1}{4} f(x) = 0, \quad x\in\R\backslash\lbrace x_1,\ldots,x_N\rbrace, 
 \end{align}
 together with the interface conditions  
 \begin{align}
  \begin{pmatrix} f(x_n-) \\ f'(x_n-) \end{pmatrix}  =
\begin{pmatrix} 1 & 0  \\  z \omega_n + z^2 \dip_n  & 1 \end{pmatrix}
\begin{pmatrix} f(x_n+) \\ f'(x_n+) \end{pmatrix},\quad n\in\{1,\dots, N\}.
\label{eqnDEdiscr02}
 \end{align}
 Hereby note that in this case, the solution $f$ is in general not differentiable at the points $x_1,\ldots,x_N$. 
 However, for simplicity of notation, we will always uniquely extend the derivative $f'$ to all of $\R$ by requiring it to be left-continuous. 

 The set of all values $z\in\C$ for which there is a nontrivial bounded solution of the differential equation~\eqref{eqnDE} is referred to as the spectrum $\sigma$ of the spectral problem~\eqref{eqnDE}. 
 Note that in this case, the bounded solution of this differential equation is unique up to scalar multiples. 

Since the measures $\omega$ and $\dip$ have compact support, for each $z\in\C$ one has spatially decaying solutions $\phi_\pm(z,\cdot\,)$ of~\eqref{eqnDE} with 
\begin{align}\label{eqnPHIAsym}
 \phi_\pm(z,x) = \E^{\mp\frac{x}{2}} 
\end{align}
 for all $x$ near $\pm\infty$. 
 In particular, note that $\phi_\pm(\,\cdot\,,x)$ and $\phi_\pm'(\,\cdot\,,x)$ are real polynomials for each fixed $x\in\R$. 
 The Wronski determinant of these solutions  
\begin{align}\label{eqnWronski}
 W(z) = \phi_+(z,x) \phi_-'(z,x) - \phi_+'(z,x) \phi_-(z,x), \quad z\in\C,
\end{align}
 is independent of $x\in\R$ (see for example \cite[Proposition~3.2]{MeasureSL}) and vanishes in some point $\lambda\in\C$ if and only if the solutions $\phi_-(\lambda,\cdot\,)$ and $\phi_+(\lambda,\cdot\,)$ are linearly dependent. 
 In this case, there is a nonzero constant $c_{\lambda}\in\C$ such that
 \begin{align}\label{eqnCC}
  \phi_-(\lambda,x) = c_{\lambda} \phi_+(\lambda,x), \quad x\in\R.   
 \end{align}
 As a consequence, one sees that the spectrum $\sigma$ is precisely the set of zeros of the polynomial $W$. 
 Associated with each eigenvalue $\lambda\in\sigma$ is the quantity
  \begin{align}\label{eqnNC}
  \gamma_{\lambda}^2 & = \int_\R |\phi_+(\lambda,x)|^2 d\omega(x) + 2\lambda \int_\R |\phi_+(\lambda,x)|^2 d\dip(x),  \end{align}
 which is referred to as the (modified) norming constant (associated with $\lambda$).

 \begin{proposition}\label{propSR}
  Each eigenvalue $\lambda$ of the spectral problem~\eqref{eqnDE} is real with 
    \begin{align}\label{eqnWdot}
    - \dot{W}(\lambda) 
   = c_{\lambda} \gamma_\lambda^2 \not=0,
  \end{align}
  where the dot denotes differentiation with respect to the spectral parameter. 
 \end{proposition}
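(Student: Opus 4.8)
The plan is to distil two facts out of the distributional equation~\eqref{eqnDE}: an \emph{energy identity}, obtained by testing the eigenvalue equation against its own eigenfunction, and a \emph{variational Lagrange identity}, expressing $\dot{W}$ as an integral of $\phi_+\phi_-$ against $\omega$ and $\dip$. Reality will come from the first identity, the equality $-\dot{W}(\lambda)=c_\lambda\gamma_\lambda^2$ from the second, and the non-vanishing from the two combined (the dot always denoting differentiation in $z$, as in the statement).

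For reality, fix $\lambda\in\sigma$ and let $\psi=\phi_+(\lambda,\cdot\,)$ be an associated bounded solution. Since $\lambda$ is a zero of $W$, relation~\eqref{eqnCC} gives $\phi_-(\lambda,\cdot\,)=c_\lambda\psi$, so $\psi$ decays exponentially at both ends and hence $\psi,\psi'\in L^2(\R)$. Testing the weak form of~\eqref{eqnDE} against $\psi$ (the decay annihilates all boundary contributions) I would obtain the identity
\begin{align*}
 A:=\int_\R |\psi'(x)|^2+\tfrac14|\psi(x)|^2\,dx
   =\lambda\int_\R|\psi|^2\,d\omega+\lambda^2\int_\R|\psi|^2\,d\dip=:\lambda B+\lambda^2 C,
\end{align*}
where $A\ge0$, $B\in\R$, and $C\ge0$ because $\dip\ge0$. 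Conjugating (all three quantities are real) and subtracting yields $(\lambda-\bar\lambda)\bigl(B+(\lambda+\bar\lambda)C\bigr)=0$. If $\lambda\notin\R$ this forces $B=-2\,\re(\lambda)\,C$, and reinserting collapses the energy identity to $A=-|\lambda|^2 C\le0$; since $A\ge0$ this gives $A=0$, hence $\psi\equiv0$, a contradiction. Thus every eigenvalue is real.

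For the trace formula I would differentiate the equations for $\phi_\pm(z,\cdot\,)$ in $z$: the functions $\dot\phi_\pm$ then solve the same equation with the extra inhomogeneity $(\omega+2z\,\dip)\phi_\pm$. With the shorthand $W_x(f,g)=fg'-f'g$, a direct computation using these equations gives, in the distributional sense,
\begin{align*}
 \frac{d}{dx}W_x(\dot\phi_+,\phi_-)=(\omega+2z\,\dip)\,\phi_+\phi_-,\qquad
 \frac{d}{dx}W_x(\phi_+,\dot\phi_-)=-(\omega+2z\,\dip)\,\phi_+\phi_-,
\end{align*}
while $\dot{W}=W_x(\dot\phi_+,\phi_-)+W_x(\phi_+,\dot\phi_-)$ is constant in $x$. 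Because $\phi_+(z,x)=\E^{-x/2}$ near $+\infty$ and $\phi_-(z,x)=\E^{x/2}$ near $-\infty$ are both independent of $z$, the functions $\dot\phi_+$ and $\dot\phi_-$ vanish near $+\infty$ and $-\infty$, respectively. Integrating the first identity over $\R$ and evaluating the constant $\dot{W}$ on the leftmost interval (where $W_x(\phi_+,\dot\phi_-)$ vanishes) then gives
\begin{align*}
 \dot{W}(z)=-\int_\R\phi_+(z,x)\phi_-(z,x)\,d\omega(x)-2z\int_\R\phi_+(z,x)\phi_-(z,x)\,d\dip(x).
\end{align*}
Setting $z=\lambda$, using $\phi_-(\lambda,\cdot\,)=c_\lambda\phi_+(\lambda,\cdot\,)$ and the reality of $\phi_+(\lambda,\cdot\,)$, the right-hand side becomes $-c_\lambda\gamma_\lambda^2$ by the definition~\eqref{eqnNC}, which is precisely the asserted identity $-\dot{W}(\lambda)=c_\lambda\gamma_\lambda^2$.

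It remains to see that $c_\lambda\gamma_\lambda^2\ne0$. Since $c_\lambda\ne0$, suppose $\gamma_\lambda^2=B+2\lambda C=0$; then $B=-2\lambda C$, and the (now real) energy identity gives $A=\lambda B+\lambda^2C=-\lambda^2C\le0$, hence again $A=0$ and $\psi\equiv0$, which is impossible. Therefore $\gamma_\lambda^2\ne0$. I expect the genuine obstacle to be the reality statement: for the indefinite quadratic pencil the plain Lagrange identity only delivers the single relation $(\lambda-\bar\lambda)\bigl(B+(\lambda+\bar\lambda)C\bigr)=0$, which does not exclude non-real $\lambda$ on its own. The decisive extra input is the positivity of the Dirichlet form $A$ together with the sign condition $\dip\ge0$ (so that $C\ge0$); some care is also needed to justify the distributional weak formulation and the vanishing of the boundary terms that produce the energy identity.
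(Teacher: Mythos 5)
Your proposal is correct and follows essentially the same route as the paper: the same energy identity (the paper's~\eqref{eqnPI}) yields reality and the non-vanishing of $\gamma_\lambda^2$, and your Lagrange-type identity for the $z$-derivatives of $\phi_\pm$ is exactly the paper's computation with the auxiliary function $W_0(z,x)=\phi_+\dot{\phi}_-'-\phi_+'\dot{\phi}_-$, just integrated from the other end. The only cosmetic differences are that you track both Wronskians $W_x(\dot{\phi}_+,\phi_-)$ and $W_x(\phi_+,\dot{\phi}_-)$ instead of a single one, and you derive $\gamma_\lambda^2\neq 0$ by contradiction where the paper observes directly that $\lambda\gamma_\lambda^2$ dominates the positive left-hand side of~\eqref{eqnPI}.
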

 
 \begin{proof}
  Let $\lambda$ be an eigenvalue of the spectral problem~\eqref{eqnDE}. 
   An integration by parts, using the differential equation~\eqref{eqnDE} shows
   \begin{align}
  \begin{split}\label{eqnPI}
  \lambda \int_\R |\phi_+(\lambda,x)|^2 d\omega(x) + & \lambda^2 \int_\R |\phi_+(\lambda,x)|^2 d\dip(x) \\ 
   & \quad = \frac{1}{4} \int_\R |\phi_+(\lambda,x)|^2 dx + \int_\R |\phi_+'(\lambda,x)|^2 dx > 0.
  \end{split}
 \end{align}
  If $\lambda$ was non-real, then an inspection of the imaginary part of the left-hand side would yield
  \begin{align*}
  \int_\R |\phi_+(\lambda,x)|^2 d\omega(x) + 2\re(\lambda) \int_\R  |\phi_+(\lambda,x)|^2 d\dip(x) = 0.
  \end{align*}
  Furthermore, computing the real part one would end up with the contradiction 
  \begin{align*}
   - |\lambda|^2 \int_\R |\phi_+(\lambda,x)|^2 d\dip(x) > 0,
  \end{align*}
  since the measure $\dip$ is non-negative.   

  In order to prove the second claim, we introduce the auxiliary function
  \begin{align*}
   W_0(z,x) = \phi_+(z,x) \dot{\phi}_-'(z,x) - \phi_+'(z,x) \dot{\phi}_(z,x), \quad x\in\R,~z\in\C,
  \end{align*}
  where the spatial differentiation is done first. 
  Using the differential equation which $\phi_\pm(\lambda,\cdot\,)$ satisfies, one gets (in a distributional sense)  
  \begin{align*}
   - W_0'(\lambda,x) = \phi_-(\lambda,x) \phi_+(\lambda,x) \omega(x) + 2\lambda \phi_-(\lambda,x) \phi_+(\lambda,x) \dip(x), \quad x\in\R.
  \end{align*}
  Now the claim follows upon noting that $W_0(\lambda,x)=0$ for every $x$ near $-\infty$ and $W_0(\lambda,x)=\dot{W}(\lambda)$ for $x$ near $\infty$.   
  In order to finish the proof, note that 
  \begin{align*}
  \lambda \gamma_\lambda^2 = \lambda \int_\R |\phi_+(\lambda,x)|^2 d\omega(x) + 2\lambda^2 \int_\R |\phi_+(\lambda,x)|^2 d\dip(x)
  \end{align*}
  is greater or equal than the left-hand side of~\eqref{eqnPI} and hence non-zero.  
 \end{proof}
 
 As a consequence, all zeros of $W$ are simple and hence the polynomial $W$ has the product representation (upon noting that $W(0)=1$) 
 \begin{align}\label{eqnW}
  W(z) = \prod_{\lambda\in\sigma} \biggl( 1-\frac{z}{\lambda}\biggr), \quad z\in\C. 
 \end{align}
 For applications to the Camassa--Holm equation, trace formulas for the spectral problem~\eqref{eqnDE} are of particular interest. 
 In fact, they will reappear as conserved quantities for global conservative multi-peakon solutions in Section~\ref{secGCMPS}. 

 \begin{proposition}\label{proptraceformulas}
  The first two trace formulas for the spectral problem~\eqref{eqnDE} are given by: 
  \begin{align}\label{eqnTF}
   \sum_{\lambda\in\sigma} \frac{1}{\lambda} & = \int_\R d\omega, &  \sum_{\lambda\in\sigma} \frac{1}{\lambda^2} & = \int_\R \int_\R \E^{-|x-s|} d\omega(s)\, d\omega(x) + 2 \int_\R d\dip.
 \end{align}
 \end{proposition}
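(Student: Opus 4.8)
The plan is to read off both trace formulas from the Taylor expansion of the Wronskian $W$ around $z=0$. Starting from the product representation~\eqref{eqnW} and using $W(0)=1$, write $W(z) = 1 + w_1 z + w_2 z^2 + \OO(z^3)$. Comparing this with the logarithm of~\eqref{eqnW} (equivalently, applying Newton's identities to the power sums of the reciprocal roots $1/\lambda$) gives
\[
 \sum_{\lambda\in\sigma}\frac{1}{\lambda} = -w_1, \qquad \sum_{\lambda\in\sigma}\frac{1}{\lambda^2} = w_1^2 - 2 w_2.
\]
Hence everything reduces to computing the first two Taylor coefficients $w_1$ and $w_2$ of the polynomial $W$.

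To compute these, I would propagate the decaying solution $\phi_-$ from $-\infty$ to the right across the point masses. Since $\E^{\pm x/2}$ span the solution space of~\eqref{eqnDEdiscr01}, write $\phi_-(z,x) = a(x)\,\E^{x/2} + b(x)\,\E^{-x/2}$ with $a,b$ piecewise constant and $a\equiv 1$, $b\equiv 0$ for $x<x_1$. As in the derivation of $W(z)$ above, a short computation shows that $W(z)$ equals the value of $a$ for $x>x_N$. At each mass point $x_n$, continuity of $f$ together with the jump of $f'$ prescribed by~\eqref{eqnDEdiscr02}, writing $\mu_n := z\,\omega_n + z^2\,\dip_n$, yields the explicit recursion
\[
 a_n = a_{n-1} - \mu_n\bigl(a_{n-1} + b_{n-1}\,\E^{-x_n}\bigr), \qquad b_n = b_{n-1} + \mu_n\bigl(a_{n-1}\,\E^{x_n} + b_{n-1}\bigr),
\]
where $(a_n,b_n)$ are the coefficients after crossing $x_1,\dots,x_n$ and $(a_0,b_0)=(1,0)$. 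Expanding this recursion to second order in $z$ and summing then produces closed forms for $w_1$ (the coefficient of $z$ in $a_N$) and $w_2$ (the coefficient of $z^2$ in $a_N$).

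Carrying out the expansion, the first-order term gives $w_1 = -\sum_n \omega_n = -\int_\R d\omega$, which is already the first trace formula. Using $a_{n-1}|_{z^0}=1$, $a_{n-1}|_{z^1} = -\sum_{i<n}\omega_i$ and $b_{n-1}|_{z^1} = \sum_{i<n}\omega_i\,\E^{x_i}$, the second-order term works out to
\[
 w_2 = -\sum_n \dip_n + \sum_{j<k} \omega_j\omega_k\bigl(1 - \E^{-(x_k-x_j)}\bigr).
\]
Combining with $w_1^2 = \sum_n\omega_n^2 + 2\sum_{j<k}\omega_j\omega_k$, the bare cross terms $\sum_{j<k}\omega_j\omega_k$ cancel in $w_1^2 - 2 w_2$, leaving
\[
 w_1^2 - 2 w_2 = \sum_n \omega_n^2 + 2\sum_{j<k}\omega_j\omega_k\,\E^{-(x_k-x_j)} + 2\sum_n\dip_n.
\]
Since the $x_n$ are strictly increasing, $\E^{-(x_k-x_j)} = \E^{-|x_k-x_j|}$ for $j<k$, so the first two terms reassemble into $\int_\R\int_\R\E^{-|x-s|}\,d\omega(s)\,d\omega(x)$ and the last into $2\int_\R d\dip$, giving the second trace formula.

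The main obstacle is the bookkeeping in the recursion: one must track the factors $\E^{\pm x_n}$ carefully and recognize, using the strict ordering of the $x_n$, that the off-diagonal contributions combine into the exponential kernel $\E^{-|x_j-x_k|}$, while the diagonal self-interaction produces the $\omega_n^2$ terms and the dipole masses $\dip_n$ enter linearly with the correct factor $2$. It is worth noting that, unlike in Proposition~\ref{propSR}, the non-negativity of $\dip$ plays no role here; only $W(0)=1$, the product representation~\eqref{eqnW}, and the strict ordering of the support points are needed.
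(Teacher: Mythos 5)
Your proof is correct and follows essentially the same route as the paper: both arguments reduce the trace formulas to the first two Taylor coefficients of $W$ at $z=0$ (Newton's identities) and then invoke the product representation~\eqref{eqnW}. The only difference is bookkeeping --- the paper extracts $\dot{W}(0)$ and $\ddot{W}(0)$ from the integral equation~\eqref{eqnIEpm}, whereas you expand an equivalent transfer-matrix recursion across the point masses, which for discrete measures is the same computation.
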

 
 \begin{proof}
  Upon viewing~\eqref{eqnDE} as a perturbed equation (see \cite[Proposition~~3.3]{MeasureSL} or the proof of \cite[Theorem~3.1]{IsospecCH}), one sees that for each $z\in\C$ the solution $\phi_\pm(z,\cdot\,)$ satisfies the integral equation
  \begin{align}\label{eqnIEpm}
  \begin{split}
   \phi_\pm(z,x) = \E^{\mp\frac{x}{2}} & + z \int_{I_\pm(x)} \left(\E^{\pm\frac{x-s}{2}} - \E^{\mp\frac{x-s}{2}}\right) \phi_\pm(z,s) d\omega(s) \\
   & + z^2 \int_{I_\pm(x)} \left(\E^{\pm\frac{x-s}{2}} - \E^{\mp\frac{x-s}{2}}\right) \phi_\pm(z,s) d\dip(s), \quad x\in\R,
   \end{split}
  \end{align}
  where $I_-(x) = (-\infty,x)$ and $I_+(x) = [x,\infty)$.
  In particular, this yields   
  \begin{align*}
   \dot{\phi}_\pm(0,x)  & =  \E^{\mp\frac{x}{2}} \int_{I_\pm(x)} \left(\E^{-|x-s|} - 1\right) d\omega(s),  \\
   \dot{\phi}_\pm'(0,x) & = \pm \frac{1}{2} \E^{\mp\frac{x}{2}} \int_{I_\pm(x)} \left(\E^{-|x-s|} + 1\right) d\omega(s),
  \end{align*}
  for each $x\in\R$.
  Using these equalities (and noting that $\phi_+(z,x)$ and $\phi_+'(z,x)$ do not depend on $z$ as long as $x$ is chosen near $+\infty$), one obtains  
  \begin{align*}
   - \dot{W}(0) = -\left( \E^{-\frac{x}{2}} \dot{\phi}_-'(0,x) + \frac{1}{2} \E^{-\frac{x}{2}} \dot{\phi}_-(0,x)\right) = \int_\R d\omega 
  \end{align*}
  for $x$ near $+\infty$.  
  After a similar, but more lengthy calculation, one ends up with the second derivative of $W$ at zero given by 
  \begin{align*}
   - \ddot{W}(0) =  \int_\R \int_\R \left(\E^{-|x-s|} - 1\right) d\omega(s)\, d\omega(x) +  2\int_\R d\dip.
  \end{align*}  
  Now the trace formulas in the claim follow immediately from the product representation~\eqref{eqnW} of the Wronskian $W$.
  \end{proof}



 \section{The inverse spectral problem}\label{secIP}

 As already mentioned in the introduction, the corresponding inverse problem has been solved (in the form of the equivalent Krein--Stieltjes string) by Krein and Langer in \cite{krla79, krla80}, employing appropriate generalizations of a theorem of Stieltjes on continued fractions. 
 Nevertheless, we will give a proof of this result in our terminology for the sake of completeness and the convenience of the reader. 
 Therefore, we first introduce the rational Weyl--Titchmarsh function $M$ on $\C\backslash\R$, associated with the spectral problem~\eqref{eqnDE}, via 
 \begin{align}\label{eqnM}
  M(z)W(z) = \frac{1}{2} \E^{\frac{x}{2}} \phi_-(z,x)- \E^{\frac{x}{2}} \phi_-'(z,x), \quad z\in\C\backslash\R,
 \end{align}
 for $x$ near $+\infty$ (note that the Wronskian on the right-hand side is constant there).

 \begin{lemma}\label{lem:herg}
 The Weyl--Titchmarsh function $M$ admits the following partial fraction expansion:  
 \begin{align}\label{eqn:3.2}
 \frac{M(z)}{z} =  \sum_{\lambda\in\sigma} \frac{\gamma_\lambda^{-2}}{\lambda(\lambda-z)}, \quad z\in\C\backslash\R.
 \end{align}
 In particular, the function in \eqref{eqn:3.2} is a Herglotz--Nevanlinna function.
 \end{lemma}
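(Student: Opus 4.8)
The plan is to establish the partial fraction expansion directly from the definition \eqref{eqnM} of $M$, using the product representation \eqref{eqnW} of $W$ together with the residue information supplied by Proposition~\ref{propSR}. Since $W$ is a polynomial with simple zeros at the eigenvalues $\lambda\in\sigma$ and $W(0)=1$, the numerator in \eqref{eqnM}, namely $\tfrac{1}{2}\E^{x/2}\phi_-(z,x) - \E^{x/2}\phi_-'(z,x)$ evaluated for $x$ near $+\infty$, is itself a polynomial in $z$. Because each eigenvalue is simple, $M$ is a rational function whose only possible poles lie at the points of $\sigma$, and I would first argue that $z=0$ is \emph{not} a pole: from the integral representation \eqref{eqnIEpm} one checks that $\phi_-(0,x)=\E^{x/2}$ and $\phi_-'(0,x)=\tfrac{1}{2}\E^{x/2}$ for all $x$, so the numerator vanishes at $z=0$ and $M(0)=0$. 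This shows $M(z)/z$ has no pole at the origin and has simple poles exactly at the nonzero eigenvalues, which is what the right-hand side of \eqref{eqn:3.2} asserts.

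The heart of the argument is the residue computation at a fixed eigenvalue $\lambda\in\sigma$. First I would compute the residue of $M$ itself at $\lambda$. Writing $M(z) = \bigl(\tfrac{1}{2}\E^{x/2}\phi_-(z,x) - \E^{x/2}\phi_-'(z,x)\bigr)/W(z)$ and evaluating the numerator at $z=\lambda$, I would use the linear dependence relation \eqref{eqnCC}, $\phi_-(\lambda,\cdot) = c_\lambda\phi_+(\lambda,\cdot)$, to rewrite the numerator in terms of $\phi_+$. For $x$ near $+\infty$ one has $\phi_+(\lambda,x)=\E^{-x/2}$, hence $\tfrac12\E^{x/2}\phi_+(\lambda,x) - \E^{x/2}\phi_+'(\lambda,x) = \tfrac12 + \tfrac12 = 1$, so the numerator equals $c_\lambda$ at $z=\lambda$. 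Since the pole of $W$ at $\lambda$ is simple, the residue of $M$ is
\begin{align*}
 \res_{z=\lambda} M(z) = \frac{c_\lambda}{\dot W(\lambda)}.
\end{align*}
Invoking Proposition~\ref{propSR}, which gives $-\dot W(\lambda)=c_\lambda\gamma_\lambda^2$, this simplifies to $\res_{z=\lambda} M(z) = -\gamma_\lambda^{-2}$. Consequently
\begin{align*}
 \res_{z=\lambda}\frac{M(z)}{z} = \frac{1}{\lambda}\,\res_{z=\lambda} M(z) = -\frac{\gamma_\lambda^{-2}}{\lambda},
\end{align*}
which matches the residue $-\gamma_\lambda^{-2}/\lambda$ of the right-hand side of \eqref{eqn:3.2} at $z=\lambda$.

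Once the residues agree at every pole and both sides vanish at $z=0$, I would finish by a Liouville-type argument: $M(z)/z$ is rational with the prescribed simple poles and correct residues, so the difference between $M(z)/z$ and the displayed sum is an entire rational function, i.e.\ a polynomial. I then check the behavior as $|z|\to\infty$; since the degree of the numerator polynomial in \eqref{eqnM} is strictly less than $\deg W$ (which I expect to verify from the structure of \eqref{eqnIEpm}, where each power of $z$ is accompanied by an integration that lowers the effective growth relative to $W$), both $M(z)/z$ and the sum tend to $0$, forcing the polynomial to be identically zero. This yields \eqref{eqn:3.2}. For the final assertion that $M(z)/z$ is a Herglotz--Nevanlinna function, it suffices to observe that the expansion exhibits it as a sum of terms $\gamma_\lambda^{-2}\bigl(\lambda(\lambda-z)\bigr)^{-1}$ with real poles $\lambda$ and, by Proposition~\ref{propSR}, strictly positive weights $\gamma_\lambda^{-2}/\lambda^2 > 0$ after the standard rewriting; each such term maps the upper half-plane into itself, and so does their sum.

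The step I expect to be the main obstacle is the clean identification of the residue of the numerator at $\lambda$ via \eqref{eqnCC}: one must be careful that the asymptotic normalizations \eqref{eqnPHIAsym} are used on the correct side ($+\infty$), that the Wronskian in \eqref{eqnM} is genuinely constant there, and that the factor $c_\lambda$ is tracked consistently so that it cancels precisely against the $c_\lambda$ in Proposition~\ref{propSR}. The degree/growth bookkeeping needed to kill the leftover polynomial is routine but must be done with some care, since a sign or degree error there would spoil the Herglotz property.
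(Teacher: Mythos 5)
Your strategy is the same as the paper's: identify the poles and residues of $M(z)/z$ via \eqref{eqnCC} and Proposition~\ref{propSR}, observe $M(0)=0$, and then remove the leftover polynomial by a growth estimate at infinity. Your residue computation is correct and is exactly the paper's, namely $\res_{z=\lambda}M(z) = c_\lambda/\dot{W}(\lambda) = -\gamma_\lambda^{-2}$. The problem lies in the growth estimate, where your stated claim is false: the numerator polynomial in \eqref{eqnM} has the \emph{same} degree as $W$, not strictly smaller degree. Indeed, the paper's proof shows $M(z)\rightarrow -\E^{x_N}\neq 0$ as $|z|\rightarrow\infty$, so numerator and denominator cannot differ in degree. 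What your Liouville step actually requires is only $M(z)=\oo(z)$, equivalently $\deg P\leq \deg W$ for the numerator $P$; this is true, but it is not routine bookkeeping. Both $P$ and $W$ are a priori of degree at most $\sum_n \deg\left(z\omega_n+z^2\dip_n\right)$, and one must show that $W$ attains this maximal degree, i.e.\ that its leading coefficient does not vanish. That is precisely the content of the paper's inductive asymptotic analysis of \eqref{eqnIEpm}, which gives $\phi_-(z,x_{n+1})\asymp \phi_-(z,x_n)\left(z\omega_n+z^2\dip_n\right)$ and $\phi_-'(z,x_n)\asymp\phi_-(z,x_n)$; the absence of cancellation there rests on the strict ordering $x_n<x_{n+1}$. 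Your heuristic that each power of $z$ comes with an integration that "lowers the effective growth relative to $W$" points in the wrong direction: integration does not lower the polynomial degree here, and the verification you defer is in fact the main technical step of the proof.

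There is also a slip in the Herglotz part: the weights you exhibit, $\gamma_\lambda^{-2}/\lambda^2$, need not be positive. Despite the notation, $\gamma_\lambda^2$ is merely a real number, and it is negative whenever $\lambda<0$ (the proof of Proposition~\ref{propSR} only yields $\lambda\gamma_\lambda^2>0$). The correct statement is that each summand equals $c_\lambda/(\lambda-z)$ with $c_\lambda = 1/\left(\lambda\gamma_\lambda^2\right)>0$, the positivity coming from \eqref{eqnPI}; this is what the paper means when it says the residues of the Herglotz--Nevanlinna function are negative. With these two repairs (replace the strict degree inequality by the bound $\deg P\leq\deg W$ proved via the transfer-matrix asymptotics, and use the positivity of $1/\left(\lambda\gamma_\lambda^2\right)$ rather than of $\gamma_\lambda^{-2}/\lambda^2$), your argument coincides with the paper's proof.
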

 
 \begin{proof}
 By definition, the poles of $M$ are simple and precisely the eigenvalues of the spectral problem~\eqref{eqnDE}. 
 Moreover, the residues of $M$ can be obtained immediately from Proposition~\ref{propSR} (also note that $M(0)=0$). 
 Indeed, for each $\lambda\in \sigma$ we get   
  \[
  \lim_{z\to \lambda}\frac{M(z)}{z}(\lambda-z)= - \frac{c_\lambda}{\lambda\dot{W}(\lambda)} = \frac{1}{\lambda\gamma_\lambda^2},
  \]
  upon employing \eqref{eqnM}, \eqref{eqnWdot} and \eqref{eqnCC}. 
  Therefore, the expansion \eqref{eqn:3.2} immediately follows if $M(z)=\OO(1)$ as $|z|\to \infty$. 
  In order to prove this, first one notes that the integral equation~\eqref{eqnIEpm} for our solution implies for each $n\in\lbrace 1,\ldots,N-1\rbrace$ 
  \begin{align*}
   \phi_-(z,x_{n+1}) \asymp \phi_-(z,x_n) (z\omega_n + z^2 \dip_n), \quad |z|\rightarrow \infty.
  \end{align*}
  Moreover, upon differentiating~\eqref{eqnIEpm} one also infers that for $n\in\lbrace 1,\ldots,N\rbrace$ 
  \begin{align*}
   \phi_-'(z,x_n) \asymp \phi_-(z,x_n), \quad |z|\rightarrow \infty.
  \end{align*} 
  As a consequence, a simple calculation shows that for every $x>x_N$ 
  \begin{align*}
   \E^{\pm \frac{x}{2}} \phi_-'(z,x) \mp \frac{1}{2} \E^{\pm\frac{x}{2}} \phi_-(z,x) \sim - \E^{\pm\frac{x_N}{2}} \phi_-(x_N,z) (z\omega_N+z^2\dip_N), \quad |z|\rightarrow \infty. 
  \end{align*}
  Hereby, we used the interface condition~\eqref{eqnDEdiscr02} and the fact that the left-hand side is constant to the right of $x_N$.  
  In view of the definition of the Weyl--Titchmarsh function, this shows that $M(z)\rightarrow - \E^{x_N}$ as $|z|\to \infty$, proving~\eqref{eqn:3.2}. 
  Finally, the function in~\eqref{eqn:3.2} is a Herglotz--Nevanlinna function indeed, since its residues are negative in view of~\eqref{eqnPI}. 
 \end{proof}
 
 On the other side, it is possible to write down a finite continued fraction expansion for $M$ in terms of $\omega$ and $\dip$. 
 Therefore, we introduce the quantities
 \begin{align}\label{eqnan}
  l_n & =  \frac{1}{2} \left( \tanh\left(\frac{x_{n+1}}{2}\right) - \tanh\left(\frac{x_n}{2}\right)\right), \quad n= 0,\ldots,N,
  \end{align}
  and the polynomials 
 \begin{align}\label{eqnbn}
  m_n(z) & = \left(z\omega_n + z^2\dip_n\right) 4 \cosh^2\left(\frac{x_n}{2}\right), \quad z\in\C,~ n=1,\ldots,N.
 \end{align}
 Hereby, we set $x_0=-\infty$ and $x_{N+1}=+\infty$ for simplicity of notation.
 
 \begin{lemma}\label{lemCF}
  The Weyl--Titchmarsh function $M$ admits the following finite continued fraction expansion:  
  \begin{align}\label{eqnContFrac}
   M(z) = 1 + \cfrac{1}{-l_{N} + \cfrac{1}{m_N(z) + \cfrac{1}{\;\ddots\; + \cfrac{1}{-l_1 + \cfrac{1}{ m_1(z) - \cfrac{1}{l_0}}}}}} \qquad z\in\C\backslash\R.
  \end{align}
 \end{lemma}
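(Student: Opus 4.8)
The plan is to reduce the spectral problem \eqref{eqnDE} to a canonical Stieltjes string, for which the continued fraction expansion is the classical Stieltjes algorithm, and then to read off the boundary terms from the normalizations in \eqref{eqnPHIAsym} and \eqref{eqnM}. The crucial observation is that the substitution
\[
 \xi = \tfrac{1}{2}\tanh\bigl(\tfrac{x}{2}\bigr), \qquad g(\xi) = \frac{f(x)}{\cosh(x/2)},
\]
maps the two basic solutions $\E^{\pm x/2}$ of the free equation \eqref{eqnDEdiscr01} to the affine functions $1\pm 2\xi$. Hence in the new variables the free equation becomes $g''=0$, the points $x_n$ are sent to $\xi_n = \tfrac12\tanh(x_n/2)$ with consecutive gaps exactly $l_n = \xi_{n+1}-\xi_n$ (cf.\ \eqref{eqnan}, with $\xi_0=-\tfrac12$ and $\xi_{N+1}=\tfrac12$), and a short computation using $\mathrm{d}x/\mathrm{d}\xi = 4\cosh^2(x/2)$ turns the interface conditions \eqref{eqnDEdiscr02} into the string jump conditions $[g'(\xi_n)] = -m_n(z)\,g(\xi_n)$, with $m_n(z)$ exactly as in \eqref{eqnbn}.

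First I would track the distinguished solution $\phi_-$ across the string. Its normalization $\phi_-(z,x)=\E^{x/2}$ near $-\infty$ translates into the initial data $(g,g')=(0,2)$ at $\xi_0$, and the propagation then alternates the elementary transfer matrices $\left(\begin{smallmatrix}1 & l_n\\ 0 & 1\end{smallmatrix}\right)$ over the gaps and $\left(\begin{smallmatrix}1 & 0\\ -m_n(z) & 1\end{smallmatrix}\right)$ across the masses. Near $+\infty$, writing $\phi_-(z,x) = a(z)\E^{x/2}+b(z)\E^{-x/2}$, one finds $g(\xi) = (a+b)+2(a-b)\xi$, whence $W(z)=a(z)$ and $M(z)W(z)=b(z)$ by the definition \eqref{eqnM}, so that $M(z)=b(z)/a(z) = 1 - g'(\xi_{N+1})/g(\xi_{N+1})$.

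To extract the continued fraction itself I would pass to the Riccati quantity $w = g/g'$: a gap of length $l_n$ acts by $w\mapsto w+l_n$, while a mass $m_n(z)$ acts by $1/w\mapsto 1/w - m_n(z)$. Starting from $w=0$ at $\xi_0$ (since $g(\xi_0)=0$) and substituting the resulting value at $\xi_{N+1}$ into $M = 1 - 1/w$, these alternating elementary M\"obius steps unwind precisely into the nested fraction \eqref{eqnContFrac}. I would make this rigorous by induction on $N$, peeling off one gap--mass cell at a time.

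The main obstacle is purely bookkeeping, but delicate: getting the alternating signs right. The reciprocal-and-subtract pattern forces a sign flip of the Riccati variable at every mass level, and this is what produces the outer $1+\cdots$, the leading $-l_N$, and --- because the two ends play asymmetric roles ($\phi_-$ is normalized at $-\infty$ while $M$ is read off at $+\infty$) --- the anomalous terminal term $-1/l_0$ rather than $-l_0$. Pinning down the constant $4\cosh^2(x_n/2)$ in $m_n$ and checking the base case (e.g.\ $N=0$, where $l_0=1$ and the fraction collapses to $M=1-1/l_0=0$, matching $\phi_-=\E^{x/2}$) are the places where care is needed.
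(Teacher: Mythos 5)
Your proposal is correct and is essentially the paper's own argument: the paper's auxiliary function $\Lambda(z,x) = 2\sinh\left(\tfrac{x}{2}\right)\cosh\left(\tfrac{x}{2}\right) - 4 \tfrac{\phi_-'(z,x)}{\phi_-(z,x)}\cosh^2\left(\tfrac{x}{2}\right)$ is exactly $-g'/g$ in your string variables, and its initial value $-1/l_0$ at $x_1-$, its jumps by $m_n(z)$ across the masses, the relation $1/\Lambda(z,x_{n+1}-)-1/\Lambda(z,x_n+)=-l_n$ over the gaps, and the final identity $M(z)=1+\lim_{x\rightarrow\infty}\Lambda(z,x)$ are precisely your Riccati steps in disguise. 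The only difference is presentational: you make the Liouville-type reduction to a Stieltjes string on $\left(-\tfrac{1}{2},\tfrac{1}{2}\right)$ explicit via transfer matrices, whereas the paper performs the identical bookkeeping directly in the variable $x$.
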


 \begin{proof}
  For each fixed $z\in\C\backslash\R$, consider the function $\Lambda(z,\cdot\,)$ on $\R$ given by 
  \begin{align}\label{eqnM0}
\Lambda(z,x) = 2\sinh\left(\frac{x}{2}\right) \cosh\left(\frac{x}{2}\right) - 4 \frac{\phi_-'(z,x)}{\phi_-(z,x)} \cosh^2\left(\frac{x}{2}\right), \quad x\in\R.
  \end{align}
  Since $\omega$ and $\dip$ have compact support (and $z$ is non-real), we have the asymptotics 
  \begin{align*}
   2\phi_-'(z,x) = \phi_-(z,x) \left(1+\OO\left(\E^{-x}\right)\right)
  \end{align*}
  as $x\rightarrow \infty$ and as a consequence (after some calculations)
  \begin{align*}
   M(z) = \lim_{x\rightarrow\infty} \Lambda(z,x) + 1.
  \end{align*}  
  Now, first of all note that because of~\eqref{eqnPHIAsym} we have 
  \begin{align*}
   \Lambda(z,x_1-) = -\left(1+\E^{-x_1}\right) = - \frac{1}{l_0}.
  \end{align*}
  Moreover, because of the interface condition \eqref{eqnDEdiscr02}, we end up with
  \begin{align*}
  \Lambda\left(z,x_n+\right)-\Lambda\left(z,x_n-\right)=(z\omega_n+z^2\dip_n)4\cosh^2\left(\frac{x_n}{2}\right), \quad n=1,\ldots,N.
  \end{align*}
  Upon differentiating~\eqref{eqnM0}, one furthermore obtains 
  \begin{align*}
   \Lambda'(z,x) = \frac{\Lambda(z,x)^2}{4\cosh^2\left(\frac{x}{2}\right)}, 
  \end{align*}
  for all real $x$ away from $\lbrace x_1,\ldots,x_N\rbrace$, which immediately implies 
  \begin{align*}
  \frac{1}{\Lambda(z,x_{n+1}-)} - \frac{1}{\Lambda(z,x_{n}+)}= - l_{n}, \quad n=1,\ldots,N. 
  \end{align*}
  Thus, we finally arrive at the following relation 
  \begin{align*}
    \frac{1}{\Lambda\left(z,x_{n+1}-\right)} = - l_{n} + \frac{1}{m_{n}(z) + \Lambda\left(z,x_{n}-\right)}, \quad n=1,\ldots,N.
  \end{align*} 
  Hereby, for $n=N$, the denominator on the left-hand side has to be read as
  \begin{align*}
   \Lambda\left(z,x_{N+1}-\right) = \lim_{x\rightarrow\infty} \Lambda\left(z,x\right),
   \end{align*}
  which completes the proof.  
 \end{proof} 
   
 We are now able to solve the inverse spectral problem following \cite[\S 3.4]{krla79}.  
 
 \begin{theorem}\label{thmIP}
  Let $\sigma$ be a finite subset of $\R$ and for each $\lambda\in\sigma$ let $\gamma_\lambda^2\in\R$ be such that $\lambda\gamma_\lambda^{2}>0$.
  Then there are unique measures $\omega$ and $\dip$ of the form in~\eqref{eqnMEA} such that the corresponding spectrum is $\sigma$ and the norming constants are $\gamma_\lambda^{2}$ for $\lambda\in\sigma$. 
 \end{theorem}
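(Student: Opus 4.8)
The plan is to reverse the two constructions leading up to this theorem. Lemma~\ref{lem:herg} shows that the spectral data of a problem~\eqref{eqnDE} determine its Weyl--Titchmarsh function $M$ through the partial fraction expansion~\eqref{eqn:3.2}, while Lemma~\ref{lemCF} shows that $M$ in turn determines (and is determined by) the positions $x_n$ and the weights $\omega_n$, $\dip_n$ through the finite continued fraction~\eqref{eqnContFrac}. Accordingly, I would start from the prescribed data, assemble the candidate function $M$, expand it into a continued fraction of the shape~\eqref{eqnContFrac}, and finally read the positions and weights off the coefficients.

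First I would define
\[
 \frac{M(z)}{z} = \sum_{\lambda\in\sigma} \frac{\gamma_\lambda^{-2}}{\lambda(\lambda-z)}, \qquad z\in\C\backslash\R,
\]
in accordance with~\eqref{eqn:3.2}. The hypothesis $\lambda\gamma_\lambda^2>0$ makes every residue $1/(\lambda\gamma_\lambda^2)$ positive, so each summand $\gamma_\lambda^{-2}/(\lambda(\lambda-z))$ maps the upper half-plane into itself; hence $M(z)/z$ is a rational Herglotz--Nevanlinna function and $M(0)=0$. The heart of the matter, and the step I expect to be the main obstacle, is to expand this $M$ into a continued fraction of precisely the admissible form~\eqref{eqnContFrac}: there must exist some $N\in\N_0$, positive numbers $l_0,\ldots,l_N$ and polynomials $m_1,\ldots,m_N$ of degree at most two with non-negative leading coefficients and $m_n\neq 0$ such that~\eqref{eqnContFrac} holds. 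This is exactly the content of the Krein--Langer solution of the indefinite moment problem~\cite{krla79, krla80}; the possible indefiniteness of the masses is what forces the quadratic (dipole) terms to appear, while the positivity of the $l_n$ and the non-negativity of the leading coefficients of the $m_n$ encode that $M/z$ is Herglotz. I would either invoke~\cite[\S 3.4]{krla79} directly or reproduce the inductive step showing that one application of the continued fraction algorithm strips off a positive constant $l_n$ and an admissible polynomial $m_n$ while returning a Herglotz function of the same subclass with strictly fewer poles, so that the process terminates after finitely many steps.

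Once the admissible expansion is available, the remaining steps are bookkeeping. Since all $m_n(0)=0$, evaluating~\eqref{eqnContFrac} at $z=0$ collapses the continued fraction to $M(0)=1-1/\sum_{n=0}^N l_n$, so the normalization $M(0)=0$ forces $\sum_{n=0}^N l_n=1$. With the conventions $x_0=-\infty$ and $x_{N+1}=+\infty$, the defining relations~\eqref{eqnan} telescope to $\tanh(x_n/2)=2\sum_{k=0}^{n-1}l_k-1$, and because the $l_k$ are positive and sum to one, these partial sums lie strictly in $(-1,1)$ and recover unique, strictly increasing, finite positions $x_1<\cdots<x_N$. Dividing the coefficients of $m_n(z)$ by $4\cosh^2(x_n/2)$ as in~\eqref{eqnbn} then yields $\omega_n$ and $\dip_n$, with $\dip_n\geq 0$ and $|\omega_n|+\dip_n>0$ guaranteed by the admissibility of $m_n$. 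Running Lemmas~\ref{lem:herg} and~\ref{lemCF} forward on the measures so constructed shows that their spectrum is $\sigma$ with the prescribed norming constants. Uniqueness follows because the continued fraction expansion of the rational function $M$ is itself unique, so the coefficients $l_n$ and $m_n$, and hence the measures $\omega$ and $\dip$, are determined by the data.
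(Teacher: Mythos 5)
Your proof skeleton coincides with the paper's own: the paper likewise defines $M$ by the partial fraction expansion~\eqref{eqn:3.2}, establishes a continued fraction expansion of the form~\eqref{eqnContFrac}, reads off the positions and weights via~\eqref{eqnan} and~\eqref{eqnbn}, and obtains uniqueness from Lemma~\ref{lem:herg} together with the recoverability of the coefficients $l_n$, $m_n$ from $M$. Your bookkeeping is also correct and matches the paper: $M(0)=0$ forces $\sum_{n=0}^N l_n=1$, the telescoped partial sums of the $l_k$ recover finite, strictly increasing positions $x_n$, and rescaling the coefficients of $m_n$ yields $\omega_n$ and $\dip_n$ (these recovery formulas are made explicit in Corollary~\ref{cor:invsol}).

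The gap sits exactly at the step you yourself identify as the main obstacle, and neither of your two options closes it. Invoking \cite{krla79, krla80} is not circular (the paper credits the result to Krein and Langer), but it reduces the proof to a translation exercise that you do not carry out: one must verify that $M$ lies in the precise function class of their theorem and that their expansion becomes~\eqref{eqnContFrac} under the change of variables hidden in~\eqref{eqnan} and~\eqref{eqnbn}. Your fallback induction, as sketched, assumes away the hard part. The difficulty is not termination; it is that the naive stripping step can fail outright, and when it fails one must prove that a \emph{quadratic} step with non-negative leading coefficient succeeds instead. Concretely, by~\eqref{eqnCobnInv} the outermost polynomial would be $m_N(z)=z s_0^2/s_1$ with $s_1=\sum_{\lambda\in\sigma}\gamma_\lambda^{-2}$; if $\lambda_1<0<\lambda_2$ and $\gamma_{\lambda_1}^{-2}+\gamma_{\lambda_2}^{-2}=0$ (the peakon--antipeakon collision of Appendix~\ref{App}), this is undefined, no expansion with linear polynomials exists, and the correct expansion has $N=1$ with a genuinely quadratic $m_1$. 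A complete argument must (a) detect degeneracy, which happens precisely when some Hankel determinant $\Delta_{1,k}$ vanishes, (b) show two consecutive determinants never vanish, which the paper gets from the Sylvester identity~\eqref{eqnDeltaRel}, and (c) show the merged quadratic step is admissible, i.e.\ its leading coefficient is non-negative, which is the sign analysis in~\eqref{eq:alpha}. The paper accomplishes all of this by a perturbation argument that your sketch has no counterpart for: it deforms the data along the Camassa--Holm flow itself, replacing $\gamma_\lambda^{-2}$ by $\gamma_\lambda^{-2}\E^{\frac{t}{2\lambda}}$ as in~\eqref{eq:M_t}, proves that for small $t\neq0$ all $\Delta_{t,1,k}$ are non-zero so that Stieltjes' classical theorem applies to $M_t$, and then computes the limits of the coefficients as $t\to0$, where pairs of linear steps collapse into single quadratic ones with coefficients~\eqref{eq:alpha} and~\eqref{eq:beta}. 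Some argument of this kind — or an honest verification of the hypotheses and conclusion of the Krein--Langer theorem — is what your proposal is missing.
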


 \begin{proof}
 {\em Existence.} Let $\sigma$ be a finite subset of $\R$ and for each $\lambda\in\sigma$ let $\gamma_\lambda^2\in\R$ such that $\lambda\gamma_\lambda^{2}>0$.
 First of all, we will show that the rational function $M$ defined by  
 \begin{align*}
  M(z) = z \sum_{\lambda\in\sigma} \frac{\gamma_\lambda^{-2}}{\lambda(\lambda-z)}, \quad z\in\C\backslash\R,  
 \end{align*}
 has a finite continued fraction expansion of the form~\eqref{eqnContFrac}.  
 Therefore, we introduce for each $k\in\N$ the following determinants of Hankel matrices 
 \begin{align*} 
  \Delta_{0,k} & = \begin{vmatrix} s_0 & s_1 & \cdots & s_{k-1} \\ s_1 & s_2 & \cdots & s_{k} \\ \vdots & \vdots & \ddots & \vdots \\ s_{k-1} & s_{k} & \cdots & s_{2k-2} \end{vmatrix}, &  
  \Delta_{1,k} & = \begin{vmatrix} s_1 & s_2 & \cdots & s_{k} \\ s_2 & s_3 & \cdots & s_{k+1} \\ \vdots & \vdots & \ddots & \vdots \\ s_{k} & s_{k+1} & \cdots & s_{2k-1} \end{vmatrix},
  \end{align*}
  as well as 
  \begin{align*}
  \Delta_{-1,k} & = \begin{vmatrix} s_{-1} & s_0 & \cdots & s_{k-2} \\ s_0 & s_1 & \cdots & s_{k-1} \\ \vdots & \vdots & \ddots & \vdots \\ s_{k-2} & s_{k-1} & \cdots & s_{2k-3} \end{vmatrix}, & 
  \Delta_{2,k} & = \begin{vmatrix} s_2 & s_3 & \cdots & s_{k+1} \\ s_3 & s_4 & \cdots & s_{k+2} \\ \vdots & \vdots & \ddots & \vdots \\ s_{k+1} & s_{k+2} & \cdots & s_{2k} \end{vmatrix}.
 \end{align*}
We also set $\Delta_{0,0}=\Delta_{1,0}=\Delta_{2,0}=1$ for notational simplicity.
 Hereby, the quantities $s_k$ are determined by the asymptotic expansion of the function 
 \begin{align}\label{eqnMHN}
  \frac{1-M(z)}{z} = \sum_{k=0}^{\infty} \frac{s_{k-1}}{z^{k}}, \quad |z|\rightarrow \infty. 
  \end{align}
  More precisely, it follows from the definition of the function $M$ that they are given explicitly in terms of the data by  
  \begin{align*}
  s_{-1} = 0, \quad s_0=1 + \sum_{\lambda\in\sigma} \frac{1}{\lambda\gamma_\lambda^{2}} \quad\text{and}\quad  
  s_k=  \sum_{\lambda\in\sigma}\frac{\lambda^{k}}{\lambda\gamma_\lambda^{2}},\quad k\in\N. 
 \end{align*}
 Next, one notes that the determinants $\Delta_{0,k}$ for $k= 1,\ldots,|\sigma|+1$ and $\Delta_{2,k}$ for $k=1,\ldots,|\sigma|$ are positive (see \cite{akh}, \cite[Theorem~3.4]{hoty12} and observe that the function in \eqref{eqnMHN} as well as the function
 \begin{align}\label{eqnM2HN}
  - \sum_{\lambda\in\sigma} \frac{\lambda \gamma_\lambda^{-2}}{\lambda-z} = \sum_{k=1}^\infty \frac{s_{k+1}}{z^k}, \quad |z|\rightarrow \infty,
 \end{align}
 are anti-Herglotz--Nevanlinna functions), with $|\sigma|$ denoting the number of elements of $\sigma$. 
 Furthermore, there are no consecutive zeros within the sequence $\Delta_{1,k}$, $k=1,\ldots,|\sigma|+1$ since for every $k\in\N$ one has the relation 
 \begin{align}\label{eqnDeltaRel}
  \Delta_{1,k+1}  \Delta_{-1,k+1}  - \Delta_{1,k} \Delta_{-1,k+2}= \Delta_{0,k+1}^2,
 \end{align} 
 which follows from Sylvester's determinant identity \cite{ba68}, \cite{ga}. 
 Namely, one needs to consider the determinant $ \Delta_{-1,k+2}$ and then apply formula \cite[(II.28)]{ga} (with $p=n-2$). 
 In particular, it follows from~\eqref{eqnDeltaRel} with $k=|\sigma|$ that $\Delta_{1,|\sigma|}\neq 0$ since $\Delta_{1,|\sigma|+1}=0$ (cf.\ \cite[Corollary~1.4]{hoty12}).

 In the case when the quantities $\Delta_{1,k}$ are non-zero for all $k=1,\ldots,|\sigma|$, a theorem of Stieltjes (see \cite[Theorem~1.39]{hoty12}) 
 guarantees that the function $M$ has a finite continued fraction expansion of the form~\eqref{eqnContFrac} with $N=|\sigma|$ and the coefficients given by  
  \begin{align}\label{eqnCoanInv}
  l_n = \frac{\Delta_{1,N-n}^2}{\Delta_{0,N-n} \Delta_{0,N-n+1}} > 0,  
 \end{align}
 for $n=0,\ldots,N$ and by 
 \begin{align}\label{eqnCobnInv}
  m_n(z) = z \frac{\Delta_{0,N-n+1}^2}{\Delta_{1,N-n} \Delta_{1,N-n+1}}, \quad z\in\C,
 \end{align}
 for $n=1,\ldots,N$. 
 
 Otherwise (that is, if $\Delta_{1,k}$ is zero for some $k\in\lbrace1,\ldots,|\sigma|\rbrace$), we consider the rational functions $M_t$ given by\footnote{ 
 Our choice for this perturbation of $M$ is motivated by the time evolution of the spectral data under the Camassa--Holm flow (cf.\ Theorem~\ref{thmTE}).}
 \begin{align}\label{eq:M_t}
  M_t(z) = z \sum_{\lambda\in\sigma} \frac{\gamma_\lambda^{-2}}{\lambda(\lambda-z)} \E^{\frac{t}{2\lambda}}, \quad z\in\C\backslash\R,
 \end{align}
 for each $t\in\R$. 
 Of course, the above considerations apply to these functions as well and we denote the corresponding quantities with an additional subscript $t$. 
 In particular, note that $M_0$ coincides with our initial function $M$ and hence so do the associated quantities (thus in the case $t=0$ we will omit the additional subscripts). 
 Since  for each $k\in\N$ the determinants $\Delta_{t,0,k}$ and $\Delta_{t,1,k}$ depend analytically on $t$, we may conclude that the quantities $\Delta_{t,1,k}$, $k=1,\ldots,|\sigma|$ are non-zero for all small enough $t\not=0$. 
 In fact, even if $\Delta_{1,k}$ vanishes for some $k\in\lbrace1,\ldots,|\sigma|-1\rbrace$ (note that $\Delta_{t,1,|\sigma|}\neq 0$ for all $t\in\R$), then this holds since the derivative at zero 
 \begin{align*}
   \Delta_{1,k}' :=\left[\frac{d}{dt} \Delta_{t,1,k}\right]_{t=0} =  2^{-k}\begin{vmatrix} {s}_0-1 & s_2  & s_{3}& \cdots & s_{k} \\ s_1 & s_3  & s_{4} & \cdots & s_{k+1} \\ \vdots & \vdots & \vdots & \ddots & \vdots \\ s_{k-2} & s_{k}  & s_{k+1}& \cdots & s_{2k-2} \\ s_{k-1} & s_{k+1}  & s_{k+2} & \cdots & s_{2k-1} \end{vmatrix} 
 \end{align*}
 is non-zero (in order to compute this derivative, just observe that 
 \begin{align*}
  \frac{d}{dt} s_{t,1} = \frac{1}{2}\left(s_{t,0}-1\right) \quad\text{and}\quad  \frac{d}{dt} s_{t,k+1} = \frac{1}{2} s_{t,k}, \quad  k\in\N, 
 \end{align*}
 by definition of $M_t$). 
 Indeed, non-vanishing of the above derivative follows from the relation  
 \begin{align}\label{eq:deriv}
 2^k\Delta_{1,k}' \cdot
 \begin{vmatrix} 
 s_1 & s_2 & \cdots & s_{k} \\ 
 s_2 & s_3 & \cdots &  s_{k+1} \\ 
 \vdots & \vdots & \ddots &  \vdots \\ 
 s_{k-1} & s_{k} & \cdots  & s_{2k-2} \\ 
 s_{k+1} & s_{k+2} & \cdots  & s_{2k} \end{vmatrix}
 = ({\Delta}_{0,k+1} - \Delta_{2,k})\Delta_{2,k-1},
 \end{align}
 which is a consequence of Sylvester's determinant identity \cite{ba68}, \cite{ga} applied to ${\Delta}_{0,k+1} - \Delta_{2,k}$  (and using $\Delta_{1,k}=0$). 
 Therefore, it suffices to note that the right-hand side of~\eqref{eq:deriv} does not vanish since the function~\eqref{eqnM2HN}, as well as the function 
 \begin{align*}
  - \frac{M(z)}{z} = \frac{s_0-1}{z} + \sum_{k=2}^\infty \frac{s_{k-1}}{z^k}, \quad |z|\rightarrow\infty, 
 \end{align*}
 are anti-Herglotz--Nevanlinna functions. 
 
 Thus, for small enough $t\not=0$, the rational function $M_t$ has a finite continued fraction expansion of the form~\eqref{eqnContFrac} with $N_t = |\sigma|$ and coefficients $l_{t,n}$, $m_{t,n}$ given in terms of the determinants $\Delta_{t,0,k}$ and $\Delta_{t,1,k}$ for $k=0,\ldots,N_t+1$ as in \eqref{eqnCoanInv} and \eqref{eqnCobnInv}, respectively. 
 Now we define the rational functions $M_{t,n}$, $n=0,\ldots,|\sigma|$ inductively via  
 \begin{align*}
  \frac{1}{M_{t,0}(z)} = -l_{t,0}, \quad z\in\C\backslash\R, 
 \end{align*}
 and for every $n=1,\ldots,|\sigma|$ by 
 \begin{align}\label{eqnMrec}
  \frac{1}{M_{t,n}(z)} = -l_{t,n}+\frac{1}{m_{t,n}(z)+M_{t,n-1}(z)}, \quad z\in\C\backslash\R,
 \end{align}
 such that $M_t = 1 + M_{t,|\sigma|}$. 
 
 Since the determinants $\Delta_{t,1,|\sigma|}$ are always non-zero, the functions $M_{t,0}$ converge pointwise to the constant function $M_0$ defined by  
 \begin{align*}
  \frac{1}{M_0(z)} = - \frac{\Delta_{1,|\sigma|}^{2}}{\Delta_{0,|\sigma|} \Delta_{0,|\sigma|+1}}, \quad z\in\C\backslash\R,
 \end{align*}
 as $t\rightarrow0$.  
 Now let $n\in\lbrace 1,\ldots,|\sigma|\rbrace$, suppose that $\Delta_{1,|\sigma|-(n-1)}$ is non-zero and that the functions $M_{t,n-1}$ converge pointwise to some function $M_{n-1}$ as $t\rightarrow0$. 
 If the determinant $\Delta_{1,|\sigma|-n}$ does not vanish, then we immediately infer from~\eqref{eqnMrec} and~\eqref{eqnCobnInv} that the functions $M_{t,n}$ converge pointwise to the rational function $M_n$ given by 
 \begin{align*}
  \frac{1}{M_n(z)} = - \frac{\Delta_{1,|\sigma|-n}^{2}}{\Delta_{0,|\sigma|-n} \Delta_{0,|\sigma|-n+1}} + \cfrac{1}{z \cfrac{\Delta_{0,|\sigma|-n+1}^2}{\Delta_{1,|\sigma|-n} \Delta_{1,|\sigma|-n+1}} + M_{n-1}(z)}, \quad z\in\C\backslash\R,
 \end{align*}
 as $t\rightarrow0$.
 Otherwise, if $\Delta_{1,|\sigma|-n}$ is zero, then $n<|\sigma|$ and the sum   
 \begin{align*}
  \dot{m}_{t,n}(0)+\dot{m}_{t,n+1}(0) = \frac{\Delta_{t,0,|\sigma|-n+1}^2}{\Delta_{t,1,|\sigma|-n} \Delta_{t,1,|\sigma|-n+1}} + \frac{\Delta_{t,0,|\sigma|-n}^2}{\Delta_{t,1,|\sigma|-n-1} \Delta_{t,1,|\sigma|-n}}
 \end{align*}
 converges to the number 
 \begin{align}\label{eq:beta}
  \beta_{|\sigma|-n} = \frac{\Delta_{-1,|\sigma|-n}}{ \Delta_{1,|\sigma|-n-1}} - \frac{\Delta_{-1,|\sigma|-n+2}}{\Delta_{1,|\sigma|-n+1}},
 \end{align}
 as $t\rightarrow 0$. 
 In fact, this can be seen upon employing the determinant identity~\eqref{eqnDeltaRel}. 
 Now recall that the function $M_{t,n+1}$ satisfies  
 \begin{align*}
   \frac{1}{M_{t,n+1}(z)}=-l_{t,n+1}+\cfrac{1}{m_{t,n+1}(z)+\cfrac{1}{-l_{t,n}+\cfrac{1}{m_{t,n}(z)+M_{t,n-1}(z)}}}, \quad z\in\C\backslash\R.
 \end{align*} 
 Noting that $\Delta_{1,|\sigma|-n}=0$ as well as using \eqref{eqnCoanInv} and \eqref{eqnCobnInv}, we obtain for $z\in\C\backslash\R$ 
 \begin{align}
  l_{t,n}m_{t,n}(z) & \to 0, \nonumber\\
  l_{t,n}m_{t,n+1}(z) & \to 0,\nonumber\\
  l_{t,n}m_{t,n}(z)m_{t,n+1}(z)&\to z^2\frac{\Delta_{0,|\sigma|-n}\Delta_{0,|\sigma|-n+1}}{\Delta_{1,|\sigma|-n-1}\Delta_{1,|\sigma|-n+1}} = -\alpha_{|\sigma|-n} z^2,\label{eq:alpha}
 \end{align}
 as $t\to 0$. 
 Hereby, note that $\alpha_{|\sigma|-n}>0$ since Sylvester's determinant identity shows
 \begin{align*}
  -\Delta_{1,k-1} \Delta_{1,k+1} = \begin{vmatrix} s_1 & s_2 & \cdots & s_{k-1} & s_{k+1} \\ 
                                                      s_2 & s_3 & \cdots & s_{k} & s_{k+2} \\ 
                                                      \vdots & \vdots & \ddots & \vdots & \vdots \\ 
                                                      s_{k} & s_{k+1} & \cdots & s_{2k-2} & s_{2k} \end{vmatrix}^2 
\end{align*}
 as long as $\Delta_{1,k}=0$. 
 Consequently, a computation shows that the functions $M_{t,n+1}$ converge pointwise to the rational function $M_{n+1}$ given by 
 \begin{align*}
  \frac{1}{M_{n+1}(z)} = - \cfrac{\Delta_{1,|\sigma|-n-1}^2}{\Delta_{0,|\sigma|-n-1}\Delta_{0,|\sigma|-n}}  +\cfrac{1}{z^2\alpha_{|\sigma|-n}+z\beta_{|\sigma|-n}+M_{n-1}(z)}, \quad z\in\C\backslash\R,
 \end{align*}
 as $t\rightarrow0$. 
 Also note that our induction hypothesis $\Delta_{1,|\sigma|-(n+1)}\not=0$ is satisfied. 
 Concluding, one observes that the functions $M_t$ converge pointwise to our initial function $M$ as $t\rightarrow0$ and hence $M = 1 + M_{|\sigma|}$. 
 Thus, the function $M$ indeed has a finite continued fraction expansion of the form \eqref{eqnContFrac} for some integer $N\in\N_0$ (which may be less than $|\sigma|$), some reals $l_n>0$ for $n=0,\ldots,N$ and some real, non-constant polynomials $m_n$ of degree at most two with $m_n(0)=0$ and $\ddot{m}_n(0)\geq0$ for every $n=1,\ldots,N$. 
 
 Now given this continued fraction expansion of the form~\eqref{eqnContFrac} for $M$, we may define discrete measures $\omega$ and $\dip$ of the form~\eqref{eqnMEA} such that~\eqref{eqnan} and~\eqref{eqnbn} hold. 
 Consequently, the corresponding Weyl--Titchmarsh function is precisely $M$ and hence the corresponding spectrum is $\sigma$ and for $\lambda\in\sigma$ the norming constant is $\gamma_\lambda^2$.  
 
  {\em Uniqueness.} 
  From Lemma~\ref{lem:herg} one sees that the Weyl--Titchmarsh function is uniquely determined by the spectrum and the corresponding norming constants. 
  On the other side, the polynomials $m_n$, $n=1,\ldots,N$ as well as the quantities $l_n$, $n=0,\ldots,N$ can be read off from the continued fraction expansion~\eqref{eqnContFrac} of the Weyl--Titchmarsh function upon letting $|z|\rightarrow\infty$. 
  Altogether, this implies that the solution to our inverse spectral problem is unique. 
 \end{proof}

 Lemma~\ref{lemCF} and the proof of Theorem~\ref{thmIP} also show that it is possible to tell from the spectral quantities whether the measure $\dip$ is actually present or not. 

\begin{corollary}\label{cor:v=0}
 Let $\omega$ and $\dip$ be measures of the form in \eqref{eqnMEA}, $\sigma$ be the associated spectrum and for each $\lambda\in\sigma$ let $\gamma_\lambda^2$ be the corresponding norming constant. 
 \begin{enumerate}[label=(\roman*), ref=(\roman*), leftmargin=*, widest=iii] 
 \item\label{it:v=0i} We have $\dip=0$ if and only if the leading principal minors $\Delta_{1,k}$, $k=1,\ldots,|\sigma|$, of the moment matrix  
 \begin{align}\label{eqnLPM}
  \begin{pmatrix} s_1 & s_2 & \cdots & s_{|\sigma|} \\ s_2 & s_3 & \cdots & s_{|\sigma|+1} \\ \vdots & \vdots & \ddots & \vdots \\ s_{|\sigma|} & s_{|\sigma|+1} & \cdots & s_{2|\sigma|-1} \end{pmatrix}, \qquad\text{with } s_k = \sum_{\lambda\in\sigma} \frac{\lambda^{k}}{\lambda \gamma_\lambda^{2}}, \quad k\in\N,
 \end{align}
  are non-zero, where $|\sigma|$ denotes the number of eigenvalues.
  \item\label{it:v=0ii} If $\eta_+$ denotes the number of nonzero elements in $\dip_1,\ldots,\dip_N$ and $\kappa_0$ denotes the number of zero elements in $\Delta_{1,1},\ldots,\Delta_{1,|\sigma|}$, then we have the relation  
  \begin{align} 
   |\sigma| - N & = \eta_+ =\kappa_0.
  \end{align}
  \item\label{it:v=0iii} In the case $\dip=0$, the measure $\omega$ is non-negative (non-positive) if and only if all minors $\Delta_{1,k}$ are positive (their signs are alternating starting with a negative sign, that is, $s_1 < 0$). 
\end{enumerate}
\end{corollary}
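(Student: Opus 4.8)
The plan is to extract all three statements from the continued fraction machinery in the proof of Theorem~\ref{thmIP}. The essential dictionary is provided by \eqref{eqnbn}: since $m_n(z) = (z\omega_n + z^2\dip_n)\,4\cosh^2(x_n/2)$, we have $\dip_n>0$ exactly when $m_n$ is a genuine quadratic ($\ddot m_n(0)>0$) and $\dip_n=0$ exactly when $m_n$ is linear. Hence $\eta_+$ simply counts the quadratic blocks in the expansion \eqref{eqnContFrac}, and everything comes down to relating those blocks to the vanishing pattern of the minors $\Delta_{1,k}$.

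First I would prove part~\ref{it:v=0ii}, from which the remaining two follow. When all $\Delta_{1,k}$, $k=1,\ldots,|\sigma|$, are nonzero, the theorem of Stieltjes yields $N=|\sigma|$ blocks, all linear by \eqref{eqnCobnInv}, so $\eta_+=0$. Each vanishing minor $\Delta_{1,k}$ (necessarily with $k\le|\sigma|-1$, since $\Delta_{1,|\sigma|}\ne0$) forces, in the limit $t\to0$ of the perturbed function $M_t$, two adjacent blocks to coalesce into a single quadratic block $z^2\alpha_{|\sigma|-n}+z\beta_{|\sigma|-n}$; this is exactly the limiting computation \eqref{eq:alpha}, where $\alpha_{|\sigma|-n}>0$ certifies a true degree-two polynomial and hence one new positive $\dip$. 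The induction in the proof of Theorem~\ref{thmIP} already walks through this case by case, so I would merely tally the outcomes: each of the $\kappa_0$ vanishing minors reduces the total block count by one and contributes exactly one quadratic block, giving $N=|\sigma|-\kappa_0$ and $\eta_+=\kappa_0$, i.e.\ the asserted chain $|\sigma|-N=\eta_+=\kappa_0$. Part~\ref{it:v=0i} is then immediate, as $\dip=0$ is equivalent to $\eta_+=0$, equivalent to $\kappa_0=0$, equivalent to all $\Delta_{1,k}$ being nonzero.

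For part~\ref{it:v=0iii} I would work in the case $\dip=0$, where all $\Delta_{1,k}\ne0$, $N=|\sigma|$, and every $m_n$ is linear. Equating \eqref{eqnbn} with \eqref{eqnCobnInv} gives $\omega_n\,4\cosh^2(x_n/2)=\Delta_{0,N-n+1}^2/(\Delta_{1,N-n}\Delta_{1,N-n+1})$, so that $\sgn(\omega_n)=\sgn(\Delta_{1,N-n}\Delta_{1,N-n+1})$ because the determinants $\Delta_{0,k}$ are positive. Reading this off for $n=N,N-1,\ldots,1$, starting from $\Delta_{1,0}=1>0$ and using $\Delta_{1,1}=s_1$, a telescoping sign argument shows that all $\omega_n>0$ is equivalent to all $\Delta_{1,k}>0$, while all $\omega_n<0$ is equivalent to strictly alternating signs beginning with $s_1<0$.

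The main obstacle is the bookkeeping in part~\ref{it:v=0ii}: one must guarantee that distinct vanishing minors trigger coalescences of \emph{disjoint} block pairs, so that the number of quadratic blocks equals $\kappa_0$ exactly and not merely up to a bound. This is precisely where the absence of consecutive zeros among the $\Delta_{1,k}$, supplied by the determinant identity \eqref{eqnDeltaRel}, does the decisive work, ensuring that the merges occurring in the induction never overlap.
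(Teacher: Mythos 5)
Your proposal is correct and takes essentially the same route as the paper: the paper gives no standalone proof of Corollary~\ref{cor:v=0} but derives it exactly from Lemma~\ref{lemCF} together with the block--minor dictionary in the proof of Theorem~\ref{thmIP}, which is precisely what you spell out (linear blocks for nonzero $\Delta_{1,k}$, one genuinely quadratic block per vanishing $\Delta_{1,k}$ via the limit \eqref{eq:alpha}, and the sign relation from \eqref{eqnCobnInv} for part~(iii)). Your emphasis on the non-consecutivity of zero minors supplied by \eqref{eqnDeltaRel}, which makes the merges disjoint and yields $\eta_+=\kappa_0$ exactly, is the same mechanism the paper relies on.
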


\begin{remark}
Note that Corollary~\ref{cor:v=0}~\ref{it:v=0i} can be found in \cite{besasz00} (cf.\ Theorem~5.5). 
Part~\ref{it:v=0iii} is well-known and due to Stieltjes. The main novelty is part~\ref{it:v=0ii}, which describes the range of the inverse spectral map in the case when at least one of minors $\Delta_{1,k}$, $k=1,\ldots,|\sigma|$ is zero. 
\end{remark}

We complete this section by summarizing formulas that provide the solution of the inverse spectral problem for the generalized string~\eqref{eqnDE}. 
In order to do this, for each $n=0,\ldots,N$ we denote with $\kappa(n)$ the largest integer $k=0,\ldots,|\sigma|$ such that the numbers of nonzero elements of $\Delta_{1,k},\ldots,\Delta_{1,|\sigma|}$ is exactly $n+1$. 
According to this definition, the determinant $\Delta_{1,\kappa(n)}$ is precisely the $(n+1)$-th nonzero member of the sequence $\Delta_{1,1},\ldots,\Delta_{1,|\sigma|}$, counting downwards. 

\begin{corollary}\label{cor:invsol}
 Let $\omega$ and $\dip$ be measures of the form in \eqref{eqnMEA}, $\sigma$ be the associated spectrum and for each $\lambda\in\sigma$ let $\gamma_\lambda^2$ be the corresponding norming constant.
 Then we have $N = |\sigma| - \kappa_0$ and the points of support of $\omega$ and $\dip$ are given by 
\begin{align}\label{eq:x_n}
 x_{n} = \log\left(\frac{\Delta_{0,\kappa(n)+1}}{\Delta_{2,\kappa(n)}}-1\right),\quad n=1,\ldots,N.
\end{align}
 Moreover, for each $n\in\lbrace 1,\ldots,N\rbrace$ the corresponding weights are given by 
\begin{align}\label{eq:wv_n01}
 \omega_{n} & =\frac{\Delta_{2,\kappa(n)}(\Delta_{0,\kappa(n)+1}-\Delta_{2,\kappa(n)})}{\Delta_{1,\kappa(n)}\Delta_{1,\kappa(n)+1}}, & 
 \dip_{n} & =0,
\end{align}
 if $\Delta_{1,\kappa(n)+1}\not=0$ as well as by   
\begin{align}
  \omega_{n} & = \frac{\Delta_{2,\kappa(n)}(\Delta_{0,\kappa(n)+1}-\Delta_{2,\kappa(n)})}{\Delta_{0,\kappa(n)+1}^2}
\left( \frac{\Delta_{-1,\kappa(n)+1}}{\Delta_{1,\kappa(n)}} - \frac{\Delta_{-1,\kappa(n)+3}}{\Delta_{1,\kappa(n)+2}} \right),\label{eq:w_n02}\\
  \dip_{n} & = - \frac{\Delta_{0,\kappa(n)+2}}{\Delta_{0,\kappa(n)+1}} \frac{\Delta_{2,\kappa(n)}(\Delta_{0,\kappa(n)+1} - \Delta_{2,\kappa(n)})}{\Delta_{1,\kappa(n)}\Delta_{1,\kappa(n)+2}}, \label{eq:v_n02}
\end{align}
 if $\Delta_{1,\kappa(n)+1}=0$.
\end{corollary}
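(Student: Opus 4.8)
The goal is to invert the continued fraction expansion of Lemma~\ref{lemCF} and read off the support points and weights of $\omega$ and $\dip$ from the spectral data. The proof of Theorem~\ref{thmIP} already produced a continued fraction for $M$ of the form~\eqref{eqnContFrac} whose coefficients $l_n$ and polynomials $m_n(z)$ are expressed through Hankel determinants $\Delta_{0,k}$, $\Delta_{1,k}$, $\Delta_{2,k}$, and $\Delta_{-1,k}$. On the other hand, Lemma~\ref{lemCF} gives the \emph{geometric} meaning of these coefficients via~\eqref{eqnan} and~\eqref{eqnbn}: the $l_n$ encode the positions $x_n$ through $\tfrac12(\tanh(x_{n+1}/2)-\tanh(x_n/2))$, and $m_n(z)=(z\omega_n+z^2\dip_n)4\cosh^2(x_n/2)$ packages the weights. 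The entire statement is therefore an exercise in matching these two descriptions of the same continued fraction and solving for $x_n$, $\omega_n$, $\dip_n$.

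\textbf{Step 1: handle the relabeling.} The proof of Theorem~\ref{thmIP} showed that when some $\Delta_{1,k}$ vanish, consecutive blocks of the continued fraction coalesce, reducing the number of genuine terms from $|\sigma|$ to $N=|\sigma|-\kappa_0$ (by Corollary~\ref{cor:v=0}~\ref{it:v=0ii}). The function $\kappa(n)$ is exactly the device that records which nonzero determinant $\Delta_{1,\kappa(n)}$ corresponds to the $(n+1)$-th surviving term. So first I would fix $n$ and identify whether the term is a ``non-degenerate'' one (the next determinant $\Delta_{1,\kappa(n)+1}$ is also nonzero, so $\dip_n=0$) or a ``merged'' one (where $\Delta_{1,\kappa(n)+1}=0$ and a dipole appears, $\dip_n>0$). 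This dichotomy is precisely the split between~\eqref{eq:wv_n01} and~\eqref{eq:v_n02}.

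\textbf{Step 2: match the coefficients.} In the non-degenerate case, comparing~\eqref{eqnCoanInv}–\eqref{eqnCobnInv} (reindexed by $\kappa(n)$) with~\eqref{eqnan}–\eqref{eqnbn} gives two equations relating $l_n,\dot m_n(0)$ to $\tanh(x_n/2)$ and $\cosh^2(x_n/2)$. Using the elementary identity $4\cosh^2(x/2)=2(1+\cosh x)$ together with the telescoping structure of the $l_n$, one solves for $\cosh x_n$ and hence for $\E^{x_n}$; this yields~\eqref{eq:x_n}, where the ratio $\Delta_{0,\kappa(n)+1}/\Delta_{2,\kappa(n)}$ emerges from the product of consecutive $l$'s. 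Then $\omega_n$ follows by dividing $\dot m_n(0)$ by $4\cosh^2(x_n/2)$, giving~\eqref{eq:wv_n01}. In the merged case, I would instead read $x_n$ off from the coalesced $l$-coefficient (it still sits at position $\kappa(n)$, giving the same formula~\eqref{eq:x_n}), while the two limits $\alpha_{|\sigma|-n}$ from~\eqref{eq:alpha} and $\beta_{|\sigma|-n}$ from~\eqref{eq:beta} supply the quadratic coefficient $\dip_n$ and linear coefficient $\omega_n$ of the merged polynomial $m_n(z)=z\omega_n 4\cosh^2(x_n/2)+z^2\dip_n 4\cosh^2(x_n/2)$, after dividing out $4\cosh^2(x_n/2)$; this produces~\eqref{eq:w_n02} and~\eqref{eq:v_n02}.

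\textbf{The main obstacle.} The bookkeeping in the merged case is the crux. The limits $\alpha_{|\sigma|-n}$ and $\beta_{|\sigma|-n}$ come out naturally in terms of $\Delta$'s with indices around $\kappa(n)$, but translating them into the stated formulas requires repeated use of the Sylvester identity~\eqref{eqnDeltaRel} to rewrite products like $\Delta_{-1,k}\Delta_{1,k-1}$ and to convert between the four families of determinants. The factor $-\Delta_{0,\kappa(n)+2}/\Delta_{0,\kappa(n)+1}$ in~\eqref{eq:v_n02} and the cancellations that turn $\alpha_{|\sigma|-n}\cdot 4\cosh^2(x_n/2)$ into precisely $\dip_n$ are where signs and indices must be tracked with care. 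Since all the substantive analytic content is already contained in Lemma~\ref{lemCF} and the proof of Theorem~\ref{thmIP}, the remainder is a finite, if somewhat intricate, determinantal computation, and I would present it as such.
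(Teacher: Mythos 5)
Your overall route is the paper's route: identify the continued fraction coefficients produced in the proof of Theorem~\ref{thmIP} (reindexed through $\kappa(n)$) with the geometric coefficients \eqref{eqnan}--\eqref{eqnbn} from Lemma~\ref{lemCF}, extract $x_n$ first, then obtain $\omega_n$ and $\dip_n$ by dividing the coefficients of $m_n$ by $4\cosh^2\left(\frac{x_n}{2}\right)$. That part of your plan is sound, and in fact the merged case you flag as ``the crux'' is the routine part: the expressions for $m_n$ in terms of the limits \eqref{eq:alpha} and \eqref{eq:beta} were already established in the proof of Theorem~\ref{thmIP}, so once \eqref{eq:x_n} is available, both \eqref{eq:wv_n01} and \eqref{eq:w_n02}--\eqref{eq:v_n02} follow by a one-line division using
\begin{align*}
 4\cosh^2\left(\frac{x_n}{2}\right) = \frac{\Delta_{0,\kappa(n)+1}^2}{\Delta_{2,\kappa(n)}\left(\Delta_{0,\kappa(n)+1}-\Delta_{2,\kappa(n)}\right)}.
\end{align*}

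The genuine gap is in your derivation of \eqref{eq:x_n} itself. The ratio $\Delta_{0,\kappa(n)+1}/\Delta_{2,\kappa(n)}$ does not ``emerge from the product of consecutive $l$'s''; since $l_j = \frac{1}{2}\left(\tanh\left(\frac{x_{j+1}}{2}\right)-\tanh\left(\frac{x_j}{2}\right)\right)$ with $x_{N+1}=+\infty$, what telescopes is the \emph{tail sum}, giving $1-\tanh\left(\frac{x_n}{2}\right) = 2\sum_{j=n}^{N} l_j$. To reach the closed form \eqref{eq:x_n} you must then evaluate
\begin{align*}
 \sum_{j=n}^{N} l_j = \sum_{k=0}^{\kappa(n)} \frac{\Delta_{1,k}^2}{\Delta_{0,k}\Delta_{0,k+1}} = \frac{\Delta_{2,\kappa(n)}}{\Delta_{0,\kappa(n)+1}},
\end{align*}
and the second equality is a nontrivial Hankel-determinant summation identity of Stieltjes (\cite[(II.8)]{sti}, cf.\ \cite[(5.8)]{besasz00}), not a formal consequence of the telescoping. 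Moreover, that identity is available only when the relevant minors are nonvanishing, whereas the interesting case here is exactly the degenerate one (some $\Delta_{1,k}=0$, i.e.\ dipoles present); the paper handles this by passing to the perturbed functions $M_t$ from the proof of Theorem~\ref{thmIP}, applying Stieltjes' identity for $t\neq 0$, and letting $t\to 0$, using that terms with $\Delta_{1,k}=0$ contribute nothing to the sum. Neither the summation identity nor this limiting argument appears in your proposal, and without them the position formula \eqref{eq:x_n} --- on which everything else rests --- is not established.
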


\begin{proof}
The proof of Theorem~\ref{thmIP} shows that the coefficients $l_n$ in the finite continued fraction expansion~\eqref{eqnContFrac} of $M$ are given by 
\begin{align*}
 l_n = \frac{\Delta_{1,\kappa(n)}^2}{\Delta_{0,\kappa(n)}\Delta_{0,\kappa(n)+1}}, \quad n=0,\ldots,N,
\end{align*}
 and furthermore for each $n=1,\ldots,N$ the polynomial $m_n$ is given by  
\begin{align*}
 m_n(z) = z \frac{\Delta_{0,\kappa(n)+1}^2}{\Delta_{1,\kappa(n)} \Delta_{1,\kappa(n)+1}}, \quad z\in\C.
\end{align*}
 if $\Delta_{1,\kappa(n)+1} \not=0$ and by 
 \begin{align*}
  m_n(z) = z\left( \frac{\Delta_{-1,\kappa(n)+1}}{\Delta_{1,\kappa(n)}} - \frac{\Delta_{-1,\kappa(n)+3}}{\Delta_{1,\kappa(n)+2}} \right) - z^2 \frac{\Delta_{0,\kappa(n)+1} \Delta_{0,\kappa(n)+2}}{\Delta_{1,\kappa(n)} \Delta_{1,\kappa(n)+2}}, \quad z\in\C. 
 \end{align*}
 if $\Delta_{1,\kappa(n)+1} = 0$.
 Moreover, with the notation from the proof of Theorem~\ref{thmIP}, we obtain for each $n\in\lbrace0,\ldots,N\rbrace$ the identity 
 \begin{align*}
  \sum_{j=n}^{N} l_j & = \sum_{j=n}^N \frac{\Delta_{1,\kappa(j)}^2}{\Delta_{0,\kappa(j)}\Delta_{0,\kappa(j)+1}} = \sum_{k=0}^{\kappa(n)} \frac{\Delta_{1,k}^2}{\Delta_{0,k}\Delta_{0,k+1}} = \lim_{t\rightarrow 0} \sum_{k=0}^{\kappa(n)} \frac{\Delta_{t,1,k}^2}{\Delta_{t,0,k}\Delta_{t,0,k+1}} \\
                     & = \lim_{t\rightarrow 0} \frac{\Delta_{t,2,\kappa(n)}}{\Delta_{t,0,\kappa(n)+1}} = \frac{\Delta_{2,\kappa(n)}}{\Delta_{0,\kappa(n)+1}}
 \end{align*}
 upon employing \cite[(II.8)]{sti} (cf.\ also \cite[(5.8)]{besasz00}) for the fourth equality. 
 As a consequence, we now obtain  
 \begin{align*}
  1-\tanh\left(\frac{x_{n}}{2}\right)=2 \sum_{j=n}^{N} l_j = 2 \frac{\Delta_{2,\kappa(n)}}{\Delta_{0,\kappa(n)+1}}, \quad n=1,\ldots,N, 
 \end{align*}
 which proves~\eqref{eq:x_n}.
 Finally, combining \eqref{eq:x_n} 
 with the formulas for the polynomials $m_n$, $n=1,\ldots,N$, yields the remaining claims. 
\end{proof}

\begin{remark}
The main results of this section are not new. In particular, Corollary~\ref{cor:invsol} was obtained by Stieltjes \cite{sti} under the assumption that all determinants $\Delta_{1,k}$, $k=1,\ldots,|\sigma|$ are positive. 
Everything remains true if these determinants are non-zero (cf.\ \cite{besasz00, hoty12, krla79, krla80}). 
In the general case, the solution of the inverse problem was given by Krein and Langer in \cite{krla79, krla80} (see also \cite{de97, lawi98}).
 \end{remark}

\section{The conservative Camassa--Holm flow}\label{secGCMPS}

 Next, we will show that our generalized spectral problem indeed serves as an isospectral problem for the conservative Camassa--Holm equation in the multi-peakon case. 
 For this purpose, let  $\omega(\,\cdot\,,t)$ and $\dip(\,\cdot\,,t)$ be discrete measures of the form in Section~\ref{s1} for each $t\in\R$. More precisely, this means that there is some integer $N(t)\in\N_0$ and strictly increasing $x_1(t),\ldots,x_{N(t)}(t)\in\R$ such that  
 \begin{align}
  \omega(\,\cdot\,,t) = \sum_{n=1}^{N(t)} \omega_n(t)\delta_{x_n(t)} \quad\text{and}\quad \dip(\,\cdot\,,t) = \sum_{n=1}^{N(t)} \dip_n(t) \delta_{x_n(t)},
 \end{align}
 where $\omega_n(t)\in\R$ and $\dip_n(t)\geq 0$  for $n=1,\ldots,N(t)$.  
 As before, for definiteness, we will also assume that $|\omega_n(t)| + \dip_n(t) > 0$ for each $n=1,\ldots,N(t)$. 
 Associated with the measures $\omega$ and $\dip$ is the function $u$ on $\R\times\R$ given by 
 \begin{align}\label{eqn4u}
  u(x,t) = \frac{1}{2} \int_\R \E^{-|x-s|} d\omega(s,t) = \frac{1}{2} \sum_{n=1}^{N(t)} \omega_n(t)\, \E^{-|x-x_n(t)|}, \quad x,\, t\in\R,
 \end{align}
 as well as the non-negative Borel measures $\mu(\,\cdot\,,t)$ on $\R$ given by 
 \begin{align}\label{eqn4mu}
  \mu(B,t) = \dip(B,t) + \int_B |u(x,t)|^2 + |u_x(x,t)|^2 dx, \quad B\in\B(\R), 
 \end{align}
 for each $t\in\R$. 
 Of course, the pair $(u,\mu)$ will play the role of a (potential) global conservative multi-peakon solution of the Camassa--Holm equation. 
 In this respect, let us mention that every global conservative multi-peakon solution is of the particular form of~\eqref{eqn4u} and~\eqref{eqn4mu}. 
 Finally, one should note that it is always possible to go back and forth between the pair $(u,\mu)$ and the measures $\omega$ and $\dip$. 
  
 Now consider the family (parametrized by time $t\in\R$) of spectral problems 
 \begin{align}\label{eqnTDspecprob}
  -f''(x) + \frac{1}{4} f(x) = z\, \omega(x,t) f(x) + z^2 \dip(x,t) f(x), \quad x\in\R,
 \end{align}
 with a complex spectral parameter $z\in\C$.  
 We will denote all spectral quantities associated with this spectral problem as in the preceding sections but with an additional time parameter. 
 In particular, the spectrum of \eqref{eqnTDspecprob} will be denoted with $\sigma(t)$ and the corresponding norming constants with $\gamma_{\lambda}^2(t)$ for each $\lambda\in\sigma(t)$.  
 The connection between these spectral problems and the conservative Camassa--Holm equation now lies in the following observation. 
 
 \begin{theorem}\label{thmTE}
  The pair $(u,\mu)$ is a global conservative multi-peakon solution of the Camassa--Holm equation if and only if the problems \eqref{eqnTDspecprob} are isospectral with  
  \begin{align}\label{eqnTE}
   \gamma_{\lambda}^2(t) = \E^{-\frac{t-t_0}{2\lambda}} \gamma_{\lambda}^2(t_0), \quad t\in\R,~ \lambda\in\sigma(t_0). 
  \end{align}
 \end{theorem}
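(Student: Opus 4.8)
The plan is to treat the two implications separately; essentially all of the analytic content lies in the forward direction, and the converse will then follow from the uniqueness in Theorem~\ref{thmIP} together with the known uniqueness of global conservative solutions. For the forward direction, suppose $(u,\mu)$ is a global conservative multi-peakon solution. On each connected component of $\R\setminus\Gamma$ it is a classical multi-peakon with $\dip(\,\cdot\,,t)\equiv 0$ and $\omega(\,\cdot\,,t)=\sum_n 2p_n(t)\,\delta_{q_n(t)}$, and the positions and heights obey~\eqref{eqnMPsys}. I would adjoin to~\eqref{eqnTDspecprob} the temporal operator of the Camassa--Holm Lax pair, which in the present variables reads
\begin{align*}
 \dot f(z,x,t)=-\Bigl(\tfrac{1}{2z}+u(x,t)\Bigr)f'(z,x,t)+\tfrac12\,u_x(x,t)\,f(z,x,t).
\end{align*}
Writing this as $\dot f=a f'+b f$ with $a=-(\tfrac{1}{2z}+u)$ and $b=\tfrac12 u_x$, a short computation gives $\dot W=(a'+2b)W$ for the ($x$-constant) Wronskian, and here $a'+2b=-u_x+u_x=0$; compatibility of the evolution with the moving interface conditions~\eqref{eqnDEdiscr02} is in turn equivalent to the system~\eqref{eqnMPsys}. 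Hence $W(z,t)$ is independent of $t$ on $\R\setminus\Gamma$, which is isospectrality there.

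For the norming constants I would exploit that, since $u=u_x=0$ near $\pm\infty$, the normalized solutions $\phi_\pm$ differ from the freely evolving ones by the scalar factors $\E^{\mp t/(4z)}$. At an eigenvalue $\lambda$ this forces the proportionality constant in~\eqref{eqnCC} to evolve as $c_\lambda(t)=\E^{(t-t_0)/(2\lambda)}c_\lambda(t_0)$, because the ratio of the two freely evolving eigensolutions is constant in $t$. Inserting this together with the $t$-independence of $-\dot W(\lambda)$ into the identity $-\dot W(\lambda)=c_\lambda\gamma_\lambda^2$ of Proposition~\ref{propSR} yields precisely~\eqref{eqnTE} on every component of $\R\setminus\Gamma$.

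The hard part is to propagate this across a collision time $t^\times\in\Gamma$, where two adjacent points merge, the heights $p_n,p_{n+1}$ blow up, and a dipole is created. I would show that the right-to-left transfer $B_n\,T(\ell)^{-1}B_{n+1}$ built from the two interface matrices in~\eqref{eqnDEdiscr02} and the free transfer matrix $T(\ell)$ over the vanishing gap $\ell=q_{n+1}-q_n$ converges, as $t\to t^\times$, to the single generalized interface matrix
\begin{align*}
 \begin{pmatrix} 1 & 0\\ z\,\omega^\times+z^2\,\dip^\times & 1\end{pmatrix},\qquad \omega^\times=2\bigl(p_n+p_{n+1}\bigr)(t^\times),\quad \dip^\times=\mu(\{q^\times\},t^\times).
\end{align*}
This is where Lemma~\ref{lem:2.3} is decisive: with $s=\sinh(\ell/2)$, its two limits force $s\,p_n,\,s\,p_{n+1}\to 0$ and $s\,p_np_{n+1}\to-\dip^\times/8$, which is exactly what makes the diagonal entries tend to $1$, the upper entry to $0$, and the lower entry to $z\omega^\times+z^2\dip^\times$. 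It follows that $\phi_\pm(z,\,\cdot\,,t)$ and $W(z,t)$ depend continuously on $t$ through $t^\times$; as $c_\lambda(t)$ is likewise continuous, so is $\gamma_\lambda^2(t)=-\dot W(\lambda)/c_\lambda(t)$, and the smooth law~\eqref{eqnTE} valid on either side therefore extends across $t^\times$. Since $\Gamma$ is discrete, this gives~\eqref{eqnTE} for all $t\in\R$.

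For the converse, assume the problems~\eqref{eqnTDspecprob} are isospectral and satisfy~\eqref{eqnTE}. By Lemma~\ref{lem:herg} their Weyl--Titchmarsh functions are then exactly the family $M_t$ of~\eqref{eq:M_t} (this is the motivation recorded in the footnote to Theorem~\ref{thmIP}), so Theorem~\ref{thmIP} determines $\omega(\,\cdot\,,t)$, $\dip(\,\cdot\,,t)$, and hence $(u,\mu)$ via~\eqref{eqn4u}--\eqref{eqn4mu}, uniquely at every time. In particular the spectral data at $t_0$ fixes the initial profile $u(\,\cdot\,,t_0)$; let $(\tilde u,\tilde\mu)$ be the unique global conservative multi-peakon solution with this initial data. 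By the forward direction already established, $(\tilde u,\tilde\mu)$ produces the very same spectral evolution~\eqref{eqnTE}, so the uniqueness part of Theorem~\ref{thmIP} forces $\omega(\,\cdot\,,t)=\tilde\omega(\,\cdot\,,t)$ and $\dip(\,\cdot\,,t)=\tilde\dip(\,\cdot\,,t)$ for all $t$. Thus $(u,\mu)=(\tilde u,\tilde\mu)$ is a global conservative multi-peakon solution, which completes the proof.
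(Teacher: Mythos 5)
Your proposal follows essentially the same route as the paper's proof: the temporal Lax equation $\dot f=-\bigl(\tfrac{1}{2z}+u\bigr)f'+\tfrac12 u_x f$ away from collision times (the paper implements it through the rescaled solution $\Psi=\E^{-t/(4z)}\bigl(\phi_-,\phi_-'\bigr)$ and the auxiliary function $\theta$), the transfer-matrix limit at a collision time via Lemma~\ref{lem:2.3}, and the converse via uniqueness of the inverse problem (Theorem~\ref{thmIP}) combined with the Holden--Raynaud uniqueness theorem. Your collision analysis is correct: the limits $s\,p_n\to 0$, $s\,p_{n+1}\to 0$ and $s\,p_np_{n+1}\to-\dip^\times/8$ do follow from \eqref{eqnCo1}--\eqref{eqnCo2} and reproduce exactly the paper's limit of $\Omega(\omega_{n+1})T(x_{n+1},x_n)\Omega(\omega_n)$. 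Your derivation of the norming-constant law is a mildly different (and arguably cleaner) variant: you get $c_\lambda(t)=\E^{(t-t_0)/(2\lambda)}c_\lambda(t_0)$ from constancy of the ratio of the freely evolving eigensolutions and then invoke $-\dot W(\lambda)=c_\lambda\gamma_\lambda^2$ from Proposition~\ref{propSR}, whereas the paper differentiates $\gamma_z^2$ directly; both are sound. Be aware, though, that the one substantive computation of the forward direction --- that compatibility of the temporal equation with the moving interface conditions \eqref{eqnDEdiscr02} is equivalent to \eqref{eqnMPsys} --- is asserted rather than carried out in your text; this is precisely the jump-condition computation the paper performs (the vanishing of the quantities $\Sigma_n^1$ and $\Sigma_n^2$), and it does work out, so this is a matter of omitted detail rather than a flaw.

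The genuine gap is in the converse. You base the comparison solution at the given time $t_0$ and invoke ``the unique global conservative multi-peakon solution with this initial data.'' But nothing in the hypotheses prevents $\dip(\,\cdot\,,t_0)\neq0$ (take, e.g., a peakon--antipeakon pair with $t_0$ the collision time), in which case $\mu(\,\cdot\,,t_0)$ has a singular part, the cited result \cite[Theorem~3.4]{hora07b} does not apply to this initial data, and Definition~\ref{def:mp} itself requires a time at which $\mu$ is absolutely continuous. The paper closes this by first choosing a base time at which $\mu$ is absolutely continuous, which is possible by Corollary~\ref{cor:v=0}: a dipole is present exactly when one of the minors $\Delta_{1,k}(t)$ vanishes, and these are real-analytic functions of $t$ which are nonvanishing for large $|t|$ (as in the proof of Proposition~\ref{prop:finite}), so all but finitely many times are admissible. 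With this one addition --- rebasing the comparison at such a time and then using the forward direction plus inverse uniqueness exactly as you do --- your converse argument goes through.
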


 \begin{proof} {\em Necessity.}
  Suppose that the pair $(u,\mu)$ is a global conservative multi-peakon solution of the Camassa--Holm equation and denote with $\Gamma$ the set of all times $t\in\R$ for which $\dip(\,\cdot\,,t)\not=0$. 
  First of all, we will show that the spectral quantities have the claimed time evolution locally near some arbitrary fixed time $t_0\in\R\backslash\Gamma$. 
  Since the set $\Gamma$ consists of isolated points (as noted in the introduction), we know that $N$ is constant (to say $N_0\in\N_0$) on $\R\backslash\Gamma$ and that the measure $\omega$ evolves according to
  \begin{align}\label{eqnHSxo}
   x_n' & = \frac{1}{2} \sum_{k=1}^{N_0} \omega_k\, \E^{-|x_n-x_k|}, & \omega_n' = \frac{1}{2} \sum_{k=1}^{N_0} \omega_n \omega_k\, \sgn(x_n-x_k)\, \E^{-|x_n-x_k|}
  \end{align}
  near $t_0$.
  In particular, for each $n\in\lbrace 1,\ldots,N_0\rbrace$ the position $x_n$ and weight $\omega_n$ are continuously differentiable within the interval $(t_0-\delta,t_0+\delta)$ for some $\delta>0$. 
  Consequently, the function $\Psi$ defined by 
  \begin{align}\label{eq:Psi}
   \Psi(x,t) = \begin{pmatrix} \psi(x,t) \\ \psi_x(x,t) \end{pmatrix} = \E^{-\frac{t}{4z}} \begin{pmatrix} \phi_-(z,x,t) \\ \phi_{-}'(z,x,t) \end{pmatrix}, \quad x,\,t\in\R,
  \end{align}
  (for any fixed nonzero $z\in\C$), has continuous first partial derivatives in each of the regions $U_n = \lbrace (x,t)\in\R\times(t_0-\delta,t_0+\delta) \,|\, x_n(t) < x < x_{n+1}(t) \rbrace$, $n=0,\ldots,N_0$.  
  In fact, this is obvious in the region $U_0$ in view of~\eqref{eqnPHIAsym}. 
  For the general case, note that for two adjacent regions, say $U_{n-1}$ and $U_{n}$, the function $\Psi$ obeys  
  \begin{align*}
   \Psi(x,t) = T(x,x_n(t))\Omega(\omega_n(t)) T(x_n(t),\tilde{x}) \Psi(\tilde{x},t), \quad (x,t)\in U_{n}, ~ (\tilde{x},t)\in U_{n-1},
  \end{align*}  
  where the matrices appearing in the above equation are given by
  \begin{align*}
   T(x,\tilde{x}) & = \begin{pmatrix} \cosh\left(\frac{x-\tilde{x}}{2}\right) & 2\sinh\left(\frac{x-\tilde{x}}{2}\right) \\ \frac{1}{2} \sinh\left(\frac{x-\tilde{x}}{2}\right) & \cosh\left(\frac{x-\tilde{x}}{2}\right)  \end{pmatrix}, \quad x,\, \tilde{x}\in\R, \\
   \Omega(\alpha) & = \begin{pmatrix} 1 & 0 \\ - z \alpha & 1 \end{pmatrix}, \quad \alpha\in\R.
  \end{align*}
  Thus, the auxiliary function
  \begin{align*}
   \theta(x) = \psi_{t}(x,t_0) + \left(\frac{1}{2z} + u(x,t_0)\right)\psi_{x}(x,t_0) - \frac{1}{2} u_x(x,t_0) \psi(x,t_0)
  \end{align*}
  is well-defined for $x\in\R\backslash\lbrace x_1(t_0),\ldots,x_{N_0}(t_0)\rbrace$.
  In order to prove that this function vanishes identically, we will first show that it is a solution of~\eqref{eqnTDspecprob} at time $t=t_0$. 
  Therefore, observe that $\theta$ is smooth away from the points $\lbrace x_1(t_0),\ldots,x_{N_0}(t_0)\rbrace$ and one readily verifies that 
  \begin{align*}
   -\theta''(x) + \frac{1}{4}\theta(x) = 0, \quad x\in\R\backslash\lbrace x_1(t_0),\ldots,x_{N_0}(t_0)\rbrace.
  \end{align*}
  Furthermore, pick some $n\in\lbrace 1,\ldots,N_0\rbrace$, some small enough $\varepsilon>0$ such that one has $(x_n(t_0)-\varepsilon,x_n(t_0)+\varepsilon) \subseteq (x_{n-1}(t_0),x_{n+1}(t_0))$ and observe that 
  \begin{align*}
   \Psi(x_n(t_0)+\varepsilon,t) & = T(x_n(t_0)+\varepsilon,x_n(t))\Omega(\omega_n(t)) \\
                                & \qquad\qquad\qquad T(x_n(t),x_n(t_0)-\varepsilon)\Psi(x_n(t_0)-\varepsilon,t),
  \end{align*}
  for $t$ near $t_0$.
  Upon differentiating with respect to $t$, evaluating at time $t=t_0$ and finally letting $\varepsilon\rightarrow0$, this gives
  \begin{align*}
   \Psi_t(x_n(t_0)+,t_0) & =  \Omega(\omega_n(t_0)) \Psi_{t}(x_n(t_0)-,t_0) \\ 
                         & \qquad\quad + z \begin{pmatrix} \omega_n(t_0) x_n'(t_0) & 0 \\ - \omega_n'(t_0) & - \omega_n(t_0) x_n'(t_0) \end{pmatrix} \Psi(x_n(t_0)-,t_0). 
  \end{align*}  
  Using this, as well as the identities 
  \begin{align*} 
   u_x(x_n(t_0)+,t_0) & = u_x(x_n(t_0)-,t_0) - \omega_n(t_0), \\
   \psi_x(x_n(t_0)+,t_0) & = \psi_x(x_n(t_0)-,t_0) - z\omega_n(t_0)\psi(x_n(t_0),t_0),
  \end{align*}
  (also note that $u(\,\cdot\,,t_0)$ and $\psi(\,\cdot\,,t_0)$ are continuous), one immediately verifies that 
  \begin{align*}
    \theta(x_n(t_0)+)  & = \theta(x_n(t_0)-) + z \omega_n(t_0) \psi(x_n(t_0),t_0) \Sigma^1_n(t_0),  \\ 
    \theta'(x_n(t_0)+) & = \theta'(x_n(t_0)-) - z \omega_n(t_0) \theta(x_n(t_0)-) \\
                       & \qquad\qquad - z\omega_n(t_0) \psi_x(x_n(t_0)-,t_0) \Sigma^1_n(t_0) - z \psi(x_n(t_0),t_0) \Sigma^2_n(t_0),
  \end{align*}
  where we used the abbreviations
  \begin{align*}
   \Sigma^1_n(t_0) & = x_n'(t_0) - u(x_n(t_0),t_0), \\ 
   \Sigma^2_n(t_0) & = \omega_n'(t_0) + \omega_n(t_0) \frac{u_x(x_n(t_0)-,t_0) + u_x(x_n(t_0)+,t_0)}{2}.
  \end{align*}
  Thus, in view of~\eqref{eqnHSxo}, the function $\theta$ indeed is a solution of~\eqref{eqnTDspecprob} at time $t=t_0$. 
  Upon noting that $\theta(x)\E^{-\frac{x}{2}}\rightarrow 0$ as $x\rightarrow -\infty$, one furthermore concludes that $\theta$ vanishes identically. 
  In particular, we obtain the equality
  \begin{align}\label{eq:psi't}
  \psi_{t}(x,t_0) =  \frac{1}{2} u_x(x,t_0) \psi(x,t_0) - \left(\frac{1}{2z} + u(x,t_0)\right)\psi_{x}(x,t_0). 
  \end{align}
  As a consequence, we may compute    
  \begin{align*}
   W_t(z,t_0) & = \E^{-\frac{x}{2}}\E^{\frac{t}{4z}} \left(\psi_{xt}(x,t_0) + \frac{1}{2}\psi_t(x,t_0) + \frac{1}{4z}\left(\psi_x(x,t_0)+\frac{1}{2}\psi(x,t_0)\right) \right) \\
              & = \E^{-\frac{x}{2}}\E^{\frac{t}{4z}} \left(u(x,t_0)+u_x(x,t_0)\right) \frac{1}{2} \left(\frac{1}{2}\psi(x,t_0) - \psi_x(x,t_0)\right)  
               =0,
  \end{align*}
  upon noting that for $x>x_{N_0}(t_0)$ one has $u_x(x,t_0)= - u(x,t_0)$.
  Thus the Wronskian, and therefore also the spectrum associated with~\eqref{eqnTDspecprob} is independent of time on each connected component of $\R\backslash\Gamma$. 
  Moreover, if $z\in\sigma(t_0)$ is an eigenvalue, then in view of~\eqref{eqnWdot} we furthermore have for all $x>x_{N_0}(t_0)$  
  \begin{align*}
   \frac{d\gamma_z^2}{dt} (t_0) = \dot{W}(z,t_0) \frac{\E^{-\frac{x}{2}}}{\E^{\frac{t_0}{4z}}\psi(x,t_0)} \left(\frac{\psi_t(x,t_0)}{\psi(x,t_0)} + \frac{1}{4z}\right) =  - \frac{1}{2z} \gamma_z^2(t_0),
  \end{align*}
  where we used (the second equality is due to the fact that $z$ is an eigenvalue) 
  \begin{align*}
   u_x(x,t_0)= - u(x,t_0) \quad\text{and}\quad \psi_x(x,t_0) = -\frac{1}{2}\psi(x,t_0),
  \end{align*} 
  for $x>x_{N_0}(t_0)$. 
  Thus, we proved that the spectral quantities have the claimed time evolution on each connected component of $\R\backslash\Gamma$. 
  
  We are left to show that our spectral quantities are continuous at each point $t^\times\in\Gamma$. 
  Therefore, introduce $x^\times_{n} = \lim_{t\rightarrow t^\times} x_n(t)$ for each $n=0,\ldots,N_0$ and note that $\omega(\,\cdot\,,t^\times)$ and $\dip(\,\cdot\,,t^\times)$ are supported on these points (cf.\ \eqref{eqnCo1} and \eqref{eqnCo2}). 
  Clearly, we have $x^\times_{1} = x_1(t^\times)$ and hence $\Psi(x_1(t)-,t) \rightarrow \Psi(x^\times_{1}-,t^\times)$ as $t\rightarrow t^\times$. 
  Now pick some $n\in\lbrace 1,\ldots,N_0-1\rbrace$, suppose that $\Psi(x_{n}(t)-,t)\rightarrow \Psi(x^\times_{n}-,t^\times)$ as $t\rightarrow t^\times$ and that $x^\times_{n}> x^\times_{n-1}$. 
  If $x^\times_{n+1}> x^\times_{n}$, then one readily sees that
  \begin{align*}
   \Psi(x_{n+1}(t)-,t) \rightarrow T(x^\times_{n+1},x^\times_{n}) \Omega\left(\lim_{t\rightarrow t^\times} \omega_n(t)\right) \Psi(x^\times_{n}-,t^\times)
  \end{align*}  
  as $t\rightarrow t^\times$. Since the limit on the right-hand side is actually $\omega(\lbrace x^\times_{n}\rbrace,t^\times)$, we see that $\Psi(x_{n+1}(t)-,t) \rightarrow \Psi(x^\times_{n+1}-,t^\times)$ as $t\rightarrow t^\times$. 
  Otherwise, if $x^\times_{n+1} = x^\times_{n}$, then 
  \begin{align*}
   \Psi(x_{n+1}(t)+,t) & = \Omega(\omega_{n+1}(t)) T(x_{n+1}(t),x_n(t)) \Omega(\omega_n(t)) \Psi(x_{n}(t)-,t)
  \end{align*}
  and a calculation, 
  taking into account~\eqref{eqnCo1} and~\eqref{eqnCo2}, 
  shows that 
  \begin{align*}
    \Omega(\omega_{n+1}(t)) T(x_{n+1}(t),x_{n}(t)) \Omega(\omega_{n}(t)) \rightarrow \begin{pmatrix} 1 & 0 \\ - z \omega(\lbrace x^\times_{n}\rbrace, t^\times) - z^2 \dip(\lbrace x^\times_{n}\rbrace, t^\times) & 1 \end{pmatrix}
  \end{align*} 
  and hence also $\Psi(x_{n+1}(t)+,t)\rightarrow \Psi(x^\times_{n+1}+,t^\times)$ as $t\rightarrow t^\times$. 
  Furthermore, in the case when $n<N_0-1$, then we clearly obtain our induction hypothesis for $n+2$, that is, $\Psi(x_{n+2}(t)-,t) \rightarrow \Psi(x^\times_{n+2}-,t^\times)$ as $t\rightarrow t^\times$ and $x^\times_{n+2} > x^\times_{n+1}$.
  Either way, after finitely many steps one ends up with $\Psi(x_{N_0}(t)+,t) \rightarrow \Psi(x^\times_{N_0}+,t^\times)$ as $t\rightarrow t^\times$ since $x^\times_{N_0} = x_{N_0}(t^\times)$.
  Consequently, one also has $\Psi(x,t) \rightarrow \Psi(x,t^\times)$ as $t\rightarrow t^\times$ for all $x> x_{N_0}(t^\times)$. 
  But this immediately implies that $W(z,t)\rightarrow W(z,t^\times)$ as $t\rightarrow t^\times$ in view of~\eqref{eqnWronski} and hence the problems~\eqref{eqnTDspecprob} are isospectral. 
  Furthermore, if $z\in\sigma(t^\times)$ is an eigenvalue, then we may conclude from~\eqref{eqnCC} and~\eqref{eqnWdot} that $\gamma_z^2(t)\rightarrow \gamma_z^2(t^\times)$ as $t\rightarrow t^\times$, which finally proves the claimed time evolution~\eqref{eqnTE}.  
   
  {\em Sufficiency.} For the converse, pick some $t_0\in\R$ such that $\mu(\,\cdot\,,t_0)$ is absolutely continuous (this is possible because of Corollary~\ref{cor:v=0}).
  By \cite[Theorem~3.4]{hora07b} there is a unique global conservative multi-peakon solution $(\tilde{u},\tilde{\mu})$ with the initial values $\tilde{u}(\,\cdot\,,t_0) = u(\,\cdot\,,t_0)$ and $\tilde{\mu}(\,\cdot\,,t_0)= \mu(\,\cdot\,,t_0)$. 
  Now the proof of the necessity and our assumption show that the corresponding spectral quantities are the same for all times $t\in\R$.
  Thus, from the uniqueness part of the inverse problem in Theorem~\ref{thmIP}, we conclude that $\tilde{u} = u$ and $\tilde{\mu} = \mu$, which finishes the proof.
 \end{proof}
 
 \begin{remark}
 For classical multi-peakon solutions, the time evolution of the spectral data under the Camassa--Holm flow was found in \cite{besasz00} and it is, of course, given by~\eqref{eqnTE}. 
 For extending this fact to global conservative multi-peakon solutions, it suffices to show that the spectral data depend continuously on time (note that such a solution is a classical multi-peakon solution away from the isolated times of blow-up). 
 More precisely, this means that one has to verify that the eigenvalues and corresponding norming constants are continuous at the times of blow-up. 
 However, this is a consequence of comparing the formulas \eqref{eqnCo1} and \eqref{eqnCo2} with the corresponding formulas \eqref{eq:beta} and \eqref{eq:alpha} in the proof of Theorem~\ref{thmIP}. 
 \end{remark}
 
 The time evolution of the spectral quantities in Theorem~\ref{thmTE} provides us with conserved quantities for global conservative multi-peakon solutions.
    
 \begin{corollary}\label{cor:5.2}
  If the pair $(u,\mu)$ is a global conservative multi-peakon solution of the Camassa--Holm equation, then the integrals
 \begin{align}\label{eqnCQ}
  \cI_1 =  \int_\R d\omega(x,t) \quad \text{and}\quad \cI_2 = \int_\R u(x,t)d\omega(x,t) + \int_\R d\dip(x,t)
 \end{align}
 are independent of time $t\in\R$.
 \end{corollary}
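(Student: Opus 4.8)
The plan is to recognize both conserved integrals as spectral quantities via the trace formulas in Proposition~\ref{proptraceformulas}, and then to invoke the isospectrality established in Theorem~\ref{thmTE}. The point is that $\cI_1$ and $\cI_2$ are, up to trivial rewriting, exactly the right-hand sides of the two trace formulas \eqref{eqnTF}, which express them in terms of the spectrum alone.

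Concretely, I would first treat $\cI_1$. The first trace formula gives $\cI_1 = \int_\R d\omega(x,t) = \sum_{\lambda\in\sigma(t)} \lambda^{-1}$ for each fixed $t$. For $\cI_2$, I would use the representation~\eqref{eqn4u} of $u$ in terms of $\omega$, namely $u(x,t) = \tfrac{1}{2}\int_\R \E^{-|x-s|}\,d\omega(s,t)$, and substitute it into the first integral defining $\cI_2$. This yields
\begin{align*}
 \int_\R u(x,t)\,d\omega(x,t) = \frac{1}{2} \int_\R \int_\R \E^{-|x-s|}\,d\omega(s,t)\,d\omega(x,t),
\end{align*}
so that, adding $\int_\R d\dip$, one obtains $\cI_2 = \tfrac{1}{2}\big(\int_\R\int_\R \E^{-|x-s|}\,d\omega(s,t)\,d\omega(x,t) + 2\int_\R d\dip(x,t)\big)$. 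By the second trace formula this is precisely $\cI_2 = \tfrac{1}{2}\sum_{\lambda\in\sigma(t)} \lambda^{-2}$.

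Having rewritten both quantities purely in terms of the spectrum, the conclusion is immediate: Theorem~\ref{thmTE} asserts that the spectral problems~\eqref{eqnTDspecprob} are isospectral, so the set $\sigma(t)$ does not depend on $t$. Hence the finite sums $\sum_{\lambda\in\sigma}\lambda^{-1}$ and $\sum_{\lambda\in\sigma}\lambda^{-2}$ are constant, and therefore so are $\cI_1$ and $\cI_2$.

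I do not expect a genuine obstacle here, since all the analytic work has already been carried out: the trace formulas are established in Proposition~\ref{proptraceformulas} for each fixed time, and the delicate part — that the spectrum is preserved even across the collision times $t^\times\in\Gamma$ where $\dip$ becomes nontrivial — is exactly what the continuity argument in the necessity part of Theorem~\ref{thmTE} provides. The only thing to be mindful of is that the trace formulas are applied at a fixed time with the measures $\omega(\,\cdot\,,t)$ and $\dip(\,\cdot\,,t)$, and that $u(\,\cdot\,,t)$ is the associated function~\eqref{eqn4u}; once that bookkeeping is in place, the identification of $\cI_1$ and $\cI_2$ with the time-independent spectral sums completes the proof.
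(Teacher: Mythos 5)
Your proof is correct and follows essentially the same route as the paper: you identify $\cI_1$ and $\cI_2$ with the spectral sums $\sum_{\lambda\in\sigma}\lambda^{-1}$ and $\tfrac{1}{2}\sum_{\lambda\in\sigma}\lambda^{-2}$ via the trace formulas of Proposition~\ref{proptraceformulas}, and then invoke the isospectrality provided by Theorem~\ref{thmTE}. The only difference is that you spell out the substitution of~\eqref{eqn4u} into $\int_\R u\,d\omega$ explicitly, a bookkeeping step the paper leaves implicit.
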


 \begin{proof}
  In view of the two trace formulas depicted in Proposition~\ref{proptraceformulas}, the quantities $\cI_1$ and $\cI_2$ in~\eqref{eqnCQ} are given in terms of the eigenvalues of~\eqref{eqnTDspecprob}.  
  Since the problems~\eqref{eqnTDspecprob} are isospectral by Theorem~\ref{thmTE}, this establishes the claim. 
 \end{proof}

 Since a simple integration by parts yields
  \begin{align*}
  \int_\R |u(x,t)|^2 + |u_x(x,t)|^2 dx = \int_\R u(x,t)d\omega(x,t), \quad t\in\R,
 \end{align*}
 the second integral $\cI_2$ is actually equal to $\mu(\R,t)$ (cf.\  \cite[Theorem~2.3]{hora07b}). 
 In particular, the $H^1(\R)$ norm of $u(\,\cdot\,,t)$ is conserved except for times when peaks collide.
 
 Finally, let us mention that one may investigate the (number of) times of collision by analyzing the zeros of the functions $t\mapsto \Delta_{1,k}(t)$, $k\in \{1,\dots, |\sigma|\}$, in view of Corollary~\ref{cor:v=0}. 
 In particular, we obtain the following estimate.

 \begin{proposition}\label{prop:finite}
  If the pair $(u,\mu)$ is a global conservative multi-peakon solution, then there are only finitely many times of collision, that is, the set $\Gamma$ is finite.
 \end{proposition}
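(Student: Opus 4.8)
The plan is to combine the explicit time evolution of the spectral data provided by Theorem~\ref{thmTE} with the description of collision times in terms of the Hankel determinants $\Delta_{1,k}$ from Corollary~\ref{cor:v=0}. The underlying idea is that, once the spectrum is frozen and the norming constants evolve exponentially, every spectral quantity that detects a collision becomes an exponential polynomial in $t$, and such functions have only finitely many zeros.

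First I would rephrase membership in $\Gamma$ on the spectral side. A collision occurs at time $t$ precisely when $\dip(\,\cdot\,,t)\neq0$, and by Corollary~\ref{cor:v=0}~\ref{it:v=0i} this happens if and only if at least one of the determinants $\Delta_{1,1}(t),\ldots,\Delta_{1,|\sigma|}(t)$ vanishes (note that $|\sigma|$ is constant in time, the problems being isospectral). Hence
\begin{align*}
 \Gamma = \bigcup_{k=1}^{|\sigma|} \bigl\{ t\in\R : \Delta_{1,k}(t) = 0 \bigr\},
\end{align*}
so it suffices to bound the zero set of each individual $\Delta_{1,k}$.

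Next I would read off the $t$-dependence of these determinants. By Theorem~\ref{thmTE} the spectrum $\sigma$ is independent of time and $\gamma_\lambda^2(t) = \E^{-\frac{t-t_0}{2\lambda}}\gamma_\lambda^2(t_0)$, so each moment is
\begin{align*}
 s_k(t) = \sum_{\lambda\in\sigma} \frac{\lambda^{k-1}}{\gamma_\lambda^2(t)} = \sum_{\lambda\in\sigma} \frac{\lambda^{k-1}}{\gamma_\lambda^2(t_0)}\,\E^{\frac{t-t_0}{2\lambda}},
\end{align*}
a finite linear combination of the real exponentials $\E^{t/2\lambda}$, $\lambda\in\sigma$ (the eigenvalues are real by Proposition~\ref{propSR} and nonzero since $W(0)=1$). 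Since each $\Delta_{1,k}(t)$ is a fixed polynomial in $s_1(t),\ldots,s_{2k-1}(t)$, it is therefore an exponential polynomial $\sum_\alpha d_\alpha \E^{\mu_\alpha t}$ with finitely many distinct real exponents $\mu_\alpha$.

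The crux of the argument is the final step. Discreteness of $\Gamma$ (already recorded in Section~\ref{sec:mp}) by itself would still allow infinitely many collision times accumulating at $\pm\infty$; it is the exponential-sum structure that rules this out, via the classical fact that a nonzero function $\sum_{\alpha=1}^{m} d_\alpha \E^{\mu_\alpha t}$ with distinct real exponents has at most $m-1$ real zeros. Thus each $\Delta_{1,k}$ that is not identically zero contributes only finitely many times to $\Gamma$. It remains to exclude identical vanishing: if $\Delta_{1,k}\equiv0$ for some $k\le|\sigma|$, the displayed characterization would force $\Gamma=\R$, contradicting its discreteness. Consequently every $\Delta_{1,k}$ is a nonzero exponential polynomial with finitely many zeros, and $\Gamma$, being a finite union of finite sets, is finite.
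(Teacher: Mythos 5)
Your proposal is correct, and its skeleton is the same as the paper's: both reduce membership in $\Gamma$ to the vanishing of the Hankel determinants $\Delta_{1,k}(t)$ via Corollary~\ref{cor:v=0}, and both use Theorem~\ref{thmTE} to conclude that the moments $s_k(t)$, and hence the determinants, are exponential polynomials in $t$ with real exponents built from the $1/2\lambda$, $\lambda\in\sigma$. The two arguments part ways only at the finishing step. The paper invokes the explicit Cauchy--Binet-type expansion from \cite[Lemma~6.1]{besasz00},
\begin{align*}
 \Delta_{1,k}(t)=\sum_{J\subseteq\sigma,~|J|=k}\frac{\Lambda_J}{\Gamma_J}\,\E^{(t-t_0)\Sigma_J},
\end{align*}
observes that the maximal and minimal exponents $\Sigma_{J^\pm}$ are attained by a \emph{unique} subset (the values $1/2\lambda$ being distinct), so that a single nonzero term dominates as $t\to\pm\infty$; hence $\Delta_{1,k}$ has no zeros for large $|t|$, is manifestly not identically zero, and analyticity finishes the job on the remaining compact interval. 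You instead work with the generic exponential-sum structure and apply the Descartes-type bound (a nonzero sum of $m$ real exponentials with distinct exponents has at most $m-1$ real zeros), ruling out $\Delta_{1,k}\equiv0$ by contradiction with the discreteness of $\Gamma$ recorded in Section~\ref{sec:mp}. Both routes are sound. Yours is more elementary in that it needs no explicit coefficient formula and it even yields a quantitative combinatorial bound on $|\Gamma|$; the price is that your non-vanishing step imports the PDE-side discreteness of $\Gamma$ (legitimate and non-circular, since that fact was established in Section~\ref{sec:mp} by ODE arguments and already underlies Theorem~\ref{thmTE}), whereas the paper's dominant-term analysis gets non-vanishing for free and, as a by-product, produces the large-$|t|$ asymptotics of $\Delta_{1,k}$ that the paper uses immediately afterwards to deduce the long-time behavior of the solution.
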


 \begin{proof}
  First of all, note that  (cf.\ \cite[Lemma 6.1]{besasz00}) for $k\in \{1,\dots,|\sigma|\}$
 \begin{align*} 
  \Delta_{1,k}(t)= \sum_{J\subseteq \sigma,~ |J|=k} \frac{\Lambda_J}{\Gamma_J}\, \E^{(t-t_0)\Sigma_J},\quad t\in\R,
 \end{align*}
 where we introduced the quantities 
 \begin{align*} 
  \Lambda_J = \prod_{\lambda,\kappa\in J,~ \lambda<\kappa} (\lambda-\kappa)^2, \quad \Gamma_J=\prod_{\lambda\in J}\gamma_\lambda^{2}(t_0) \quad\text{and}\quad \Sigma_J = \sum_{\lambda\in J} \frac{1}{2\lambda}.
 \end{align*}
 Therefore, we have
 \begin{align*}
  \Delta_{1,k}(t)=\frac{\Lambda_{J^\pm}}{\Gamma_{J^\pm}} \E^{(t-t_0)\Sigma_{J^\pm}}(1+\oo(t)),\quad t\to \pm \infty,
 \end{align*}
 where the index sets $J^\pm\subseteq\sigma$ with $|J^\pm|=k$ are chosen such that 
 \begin{align*}
  \Sigma_{J^+}= \max_{J\subseteq\sigma,~ |J|=k}\Sigma_J \quad\text{and}\quad \Sigma_{J^-} = \min_{J\subseteq\sigma,~|J|=k} \Sigma_J.
 \end{align*}
 In particular, this shows that the (analytic) function $t\mapsto\Delta_{1,k}(t)$ has no zeros for large enough $|t|$. 
 Thus, this function has only finitely many zeros and hence the claim follows in view of Corollary~\ref{cor:v=0}.
 \end{proof}
    
 As a consequence of Proposition~\ref{prop:finite}, one sees that a global conservative solution of the Camassa--Holm equation is a classical multi-peakon solution after long enough time. 
 In particular, long-time asymptotics for this solution are now readily deduced from the ones given in \cite[Section~6]{besasz00}.

 \appendix 
 
 \section{Global conservative two-peakon solutions}\label{App}
 
 In this appendix, we will employ our findings in order to solve the conservative Camassa--Holm equation in the two-peakon case. 
 Therefore, let the pair $(u,\mu)$ be a global conservative solution of the Camassa--Holm equation such that for some $t_0\in\R$ the measure $\mu(\,\cdot\,,t_0)$ is absolutely continuous and 
 \begin{align}\label{eq:app01}
  u(x,t_0) = \frac{1}{2} \sum_{n=1}^2 \omega_n(t_0)\, \E^{-|x-x_n(t_0)|}, \quad x\in\R,
 \end{align} 
 where $\omega_1(t_0)$, $\omega_2(t_0)\neq 0$ and $ x_1(t_0)<x_2(t_0)$. 
 The corresponding spectral problem at time $t_0$ is given by 
 \begin{align}\label{eq:SP+-}
   -f''(x)+\frac{1}{4}f(x) = z \sum_{n=1}^2 \omega_n(t_0)\delta_{x_n(t_0)}(x) \,f(x),\quad x\in\R,
 \end{align}
 with a complex spectral parameter $z\in\C$. 
 From Corollary~\ref{cor:v=0}~\ref{it:v=0ii}, we know that there are precisely two eigenvalues, say $\lambda_1$ and $\lambda_2$ with $\lambda_1 < \lambda_2$. 
 As a consequence, the Wronskian $W$ is a polynomial of degree two which can be written down as   
 \begin{align} \label{eq:wro}
  W(z) & = 1-(\omega_1(t_0)+\omega_2(t_0))z+ \omega_1(t_0)\omega_2(t_0) \left(1-\E^{-|x_2(t_0)-x_1(t_0)|}\right) \frac{z^2}{2}, \quad z\in\C,
 \end{align}
 in view of the trace formulas~\eqref{eqnTF} in Proposition~\ref{proptraceformulas}.
 Thus, we can compute the spectrum $\sigma=\lbrace\lambda_1,\lambda_2\rbrace$ of \eqref{eq:SP+-} by solving for the roots of $W$. 
 Moreover, for every $\lambda\in\sigma$ the corresponding norming constant at time $t_0$ is given by 
 \begin{align}
  \gamma_\lambda^2(t_0) = \omega_1(t_0) \E^{-x_1(t_0)} \left( 1 - \lambda\omega_2(t_0)\left(1-\E^{x_1(t_0)-x_2(t_0)}\right) \right)^2 + \omega_2(t_0) \E^{-x_2(t_0)}, 
 \end{align}
 which may be easily obtained upon employing the integral equation~\eqref{eqnIEpm}.

 Now the time evolution of the spectral data (that is, the norming constants) under the conservative Camassa--Holm flow is given by Theorem~\ref{thmTE}:  
 \begin{align}
  \gamma_\lambda^2(t)=\gamma_\lambda^2(t_0) \E^{-\frac{t-t_0}{2\lambda}},\quad t\in\R,~\lambda\in\lbrace\lambda_1,\lambda_2\rbrace.
 \end{align} 
 Hence we may apply the solution of the inverse spectral problem in Corollary~\ref{cor:invsol} in order to find the global conservative two-peakon solution 
 with initial data \eqref{eq:app01} at some given time $t\in\R$. 
 Therefore, one first has to compute the four moments
 \begin{align} 
  s_0(t) & =1+ \frac{1}{\lambda_1\gamma_{\lambda_1}^2(t)} + \frac{1}{\lambda_2\gamma_{\lambda_2}^2(t)}, & s_1(t) & = \frac{1}{\gamma_{\lambda_1}^2(t)} + \frac{1}{\gamma_{\lambda_2}^2(t)}, \\  
  s_2(t) & = \frac{\lambda_1}{\gamma_{\lambda_1}^2(t)} + \frac{\lambda_2}{\gamma_{\lambda_2}^2(t)}, & s_3(t) & = \frac{\lambda_1^2}{\gamma_{\lambda_1}^2(t)} + \frac{\lambda_2^2}{\gamma_{\lambda_2}^2(t)}. 
 \end{align} 
 In particular, from Corollary~\ref{cor:invsol} we immediately infer that 
 \begin{align}
  N(t) = \begin{cases}
          2, & s_1(t) \not= 0, \\
          1, & s_1(t) =0, 
         \end{cases}
 \end{align}
 since the determinant $\Delta_{1,1}(t)$ is simply given by $s_1(t)$. 
 
 In the first case (when $s_1(t) \not=0$), we recover the characteristics to be 
 \begin{align} \label{eqnPos}
  x_1(t) & 
            = \log\left(\frac{(\lambda_2 - \lambda_1)^2}{\lambda_1\lambda_2(\lambda_2\gamma_1^2(t) + \lambda_1\gamma_2^2(t))}\right), \\
  \label{eqnPosb}
  x_2(t) & 
            = \log\left( \frac{1}{\lambda_1 \gamma_{\lambda_1}^2(t)}+\frac{1}{\lambda_2\gamma_{\lambda_2}^2(t)} \right).
 \end{align}
 Moreover, the corresponding weights are readily written down as 
 \begin{align}\label{eq:app10}
  \omega_1(t) & =\frac{s_2(t)((s_0(t)-1)s_2(t)-s_1(t)^2)}{s_1(t)(s_1(t)s_3(t)-s_2(t)^2)}, & \omega_2(t) & =\frac{s_0(t)-1}{s_1(t)},
 \end{align}
 in view of Corollary~\ref{cor:invsol}.
 Hereby, one should also notice that 
 \begin{align}
  \omega_1(t) = \omega_1(t_0) + \omega_2(t_0) -\omega_2(t) = \frac{1}{\lambda_1} + \frac{1}{\lambda_2} + \frac{1-s_0(t)}{s_1(t)}, 
 \end{align}
 upon employing the first trace formula in~\eqref{eqnTF}. 
 
 In the second case (when $s_1(t)=0$), the characteristics coincide at the point  
 \begin{align}\label{eqnPos2}
  x_1(t)=\log\left(\frac{1}{\lambda_1\gamma_1^2(t)}+\frac{1}{\lambda_2\gamma_2^2(t)}\right).
 \end{align}
 The corresponding weight $\omega_1(t)$ and dipole $\dip_1(t)$ are then simply given by   
 \begin{align}\label{eqnOmDip}
  \omega_1(t)  
                  = \frac{1}{\lambda_1} + \frac{1}{\lambda_2}, \qquad 
  \dip_1(t)    
                  = - \frac{1}{\lambda_1\lambda_2},
 \end{align}
 which is most easily obtained by comparing the (time independent) Wronskian of our spectral problem at time $t$  
 with the polynomial given in~\eqref{eq:wro}. 
 
 Summarizing, we are now able to write down the global conservative two-peakon solution $(u,\mu)$ of the Camassa--Holm equation with initial data given by~\eqref{eq:app01}. 
 Therefore, we distinguish the following two different cases:

{\em (i) The peakon--peakon case.} It follows from the representation~\eqref{eq:wro} of the Wronskian that  $\lambda_1\lambda_2>0$ if and only if $\omega_1(t_0) \omega_2(t_0) >0$. 
 In this case, the weak solution is unique ($x_2(t)>x_1(t)$ for all $t\in\R$ since $s_1(t)\neq 0$) and hence the conservative solution coincides with the classical one and is given by 
 \begin{align}
  u(x,t)=\frac{1}{2}\sum_{n=1}^2 \omega_n(t)\, \E^{-|x-x_n(t)|}, \quad x,\, t\in\R.  
 \end{align}
 The coefficients appearing here are given by the equations~\eqref{eqnPos}, \eqref{eqnPosb} and~\eqref{eq:app10}.
 Of course, the measures $\mu(\,\cdot\,,t)$ are absolutely continuous for all $t\in\R$.

{\em (ii) The peakon--antipeakon case.}
 Now let $\omega_1(t_0)$ and $\omega_2(t_0)$ be of different signs, that is, $\omega_1(t_0)\omega_2(t_0) < 0$. 
 In this case, there is precisely one time $t^\times\in\R$ such that $s_1(t^\times)=0$ which is given by   
 \begin{align}\label{eq:t_0}
  t^\times = t_0 + \frac{2\lambda_1\lambda_2}{\lambda_2-\lambda_1}\log\left(-\frac{\gamma_1^2(t_0)}{\gamma_2^2(t_0)}\right).
 \end{align}
 In view of the considerations above, the global conservative solution is given by 
 \begin{align}
  u(x,t)=\begin{cases}
   \frac{1}{2} \sum_{n=1}^2 \omega_n(t)\, \E^{-|x-x_n(t)|}, & t\neq t^\times,\\
   \frac{1}{2} \omega_1(t)\, \E^{-|x-x_1(t)|}, & t=t^\times.
  \end{cases}
 \end{align}
 Hereby, the coefficients are given by  the equations~\eqref{eqnPos}, \eqref{eqnPosb} and~\eqref{eq:app10} in the case $t\not=t^\times$ and by the equations~\eqref{eqnPos2} and~\eqref{eqnOmDip} in the case $t=t^\times$. 
 In order to finish our description of global conservative two-peakon solutions, we notice that the measures $\mu(\,\cdot\,,t)$ are absolutely continuous as long as $t\not=t^\times$. 
 Only the measure $\mu(\,\cdot\,,t^\times)$ admits a singular part which is supported on the point of collision of the characteristics. 
 More precisely, we have    
 \begin{align}
  \mu_{\text{s}}(B,t^\times) & = 
                                  \dip_1(t^\times) \delta_{x_1(t^\times)}(B), \quad B\in\B(\R),
 \end{align}
 where $\dip_1(t^\times)$ and $x_1(t^\times)$ are given by the equations~\eqref{eqnPos2} and~\eqref{eqnOmDip}.

\bigskip
\noindent
{\bf Acknowledgments.}
 We thank Vladimir Derkach, Katrin Grunert, Mark Malamud, Gerald Teschl and Harald Woracek for helpful discussions and hints with respect to the literature.  
 J.E.\ gratefully acknowledges the kind hospitality of the {\em Institut Mittag-Leffler} (Djursholm, Sweden) during the scientific program on {\em Inverse Problems and Applications} in spring 2013, where parts of this article were written.


\begin{thebibliography}{XX}

\bibitem{akh}
N.\ I.\ Akhiezer, {\em The Classical Moment Problem and Some Related Questions in Analysis}, translated by N. Kemmer, Hafner Publishing, New York, 1965.

\bibitem{ba68}
E.\ H.\ Bareiss, {\em Sylvester's identity and multistep integer-preserving Gaussian elimination}, Math.\ Comp.\ {\bf 22} (1968), 565--578.

\bibitem{besasz98}
R.\ Beals, D.\ H.\ Sattinger and J.\ Szmigielski, {\em Acoustic scattering and the extended Korteweg--de Vries hierarchy}, Adv.\ Math.\ {\bf 140} (1998), no.~2, 190--206.

\bibitem{besasz00}
R.\ Beals, D.\ H.\ Sattinger and J.\ Szmigielski, {\em Multipeakons and the classical moment problem}, Adv.\ Math.\ {\bf 154} (2000), no.~2, 229--257.

\bibitem{be04}
C.\ Bennewitz, {\em On the spectral problem associated with the Camassa--Holm equation}, J.\ Nonlinear Math.\ Phys.\ {\bf 11} (2004), no.~4, 422--434.

\bibitem{bebrwe08}
C.\ Bennewitz, B.\ M.\ Brown and R.\ Weikard, {\em Inverse spectral and scattering theory for the half-line left-definite Sturm--Liouville problem}, SIAM J.\ Math.\ Anal.\ {\bf 40} (2008/09), no.~5, 2105--2131.

\bibitem{bebrwe12}
C.\ Bennewitz, B.\ M.\ Brown and R.\ Weikard, {\em Scattering and inverse scattering for a left-definite Sturm--Liouville problem}, J.\ Differential Equations {\bf 253} (2012), no.~8, 2380--2419.

\bibitem{brco07}
A.\ Bressan and A.\ Constantin, {\em Global conservative solutions of the Camassa--Holm equation}, Arch.\ Ration.\ Mech.\ Anal.\ {\bf 183} (2007), no.~2, 215--239.

\bibitem{brco07a}
A.\ Bressan and A.\ Constantin, {\em Global dissipative solutions of the Camassa--Holm equation}, Anal.\ Appl.\ (Singap.) {\bf 5} (2007), no.~1, 1--27.

\bibitem{caho93}
R.\ Camassa and D.\ Holm, {\em An integrable shallow water equation with peaked solitons}, Phys.\ Rev.\ Lett.\ {\bf 71} (1993), no.~11, 1661--1664.

\bibitem{co00}
A.\ Constantin, {\em Existence of permanent and breaking waves for a shallow water equation: a geometric approach}, Ann.\ Inst.\ Fourier (Grenoble) {\bf 50} (2000), no.~2, 321--362.

\bibitem{co01}
A.\ Constantin, {\em On the scattering problem for the Camassa--Holm equation}, R.\ Soc.\ Lond.\ Proc.\ Ser.\ A Math.\ Phys.\ Eng.\ Sci.\ {\bf 457} (2001), no.~2008, 953--970.

\bibitem{coes98}
A.\ Constantin and J.\ Escher, {\em Global existence and blow-up for a shallow water equation}, Ann.\ Scuola Norm.\ Sup.\ Pisa Cl.\ Sci.\ (4) {\bf 26} (1998), no.~2, 303--328.

\bibitem{coes98b}
A.\ Constantin and J.\ Escher, {\em Wave breaking for nonlinear nonlocal shallow water equations}, Acta Math.\ {\bf 181} (1998), no.~2, 229--243.

\bibitem{cogeiv06} 
A.\ Constantin, V.\ S.\ Gerdjikov, and R.\ I.\ Ivanov, {\em Inverse scattering transform for the Camassa--Holm equation}, Inverse Problems 22 (2006), 2197--2207.

\bibitem{cola09}
A.\ Constantin and D.\ Lannes, {\em The hydrodynamical relevance of the Camassa--Holm and Degasperis--Procesi equations}, Arch.\ Ration.\ Mech.\ Anal.\ {\bf 192} (2009), no.~1, 165--186.

\bibitem{como00}
A.\ Constantin and L.\ Molinet, {\em Global weak solutions for a shallow water equation}, Comm.\ Math.\ Phys.\ {\bf 211} (2000), no.~1, 45--61.

\bibitem{cost00}
A.\ Constantin and W.\ Strauss, {\em Stability of peakons}, Comm.\ Pure Appl.\ Math.\ {\bf 53} (2000), no.~5, 603--610.

\bibitem{de97}
V.\ Derkach, {\em On indefinite moment problems and resolvent matrices of Hermitian operators in Kre\u{\i}n spaces}, Math.\ Nachr.\ {\bf 184} (1997), 135--166.

\bibitem{IsospecCH}
J.\ Eckhardt and G.\ Teschl, {\em On the isospectral problem of the dispersionless Camassa--Holm equation}, Adv.\ Math.\ {\bf 235} (2013), 469--495.

\bibitem{MeasureSL}
J.\ Eckhardt and G.\ Teschl, {\em Sturm--Liouville operators with measure-valued coefficients}, J.\ Anal.\ Math.\ {\bf 120} (2013), no.~1, 151--224.

\bibitem{fofu81}
A.\ S.\ Fokas and B.\ Fuchssteiner, {\em Symplectic structures, their B\"acklund transformations and hereditary symmetries}, Phys.\ D {\bf 4} (1981/82), no.~1, 47--66. 

\bibitem{ga}
F.\ R.\ Gantmacher, {\em The Theory of Matrices. Vol. 1}, translated by K.\ A.\ Hirsch, Amer.\ Math.\ Soc., Chelsea Publishing, New York, 1959.

\bibitem{hora06}
H.\ Holden and X.\ Raynaud, {\em A convergent numerical scheme for the Camassa--Holm equation based on multipeakons}, Discrete Contin.\ Dyn.\ Syst.\ {\bf 14} (2006), no.~3, 505--523.

\bibitem{hora07}
H.\ Holden and X.\ Raynaud, {\em Global conservative solutions of the Camassa--Holm equation---a Lagrangian point of view}, Comm.\ Partial Differential Equations {\bf 32} (2007), no.~10-12, 1511--1549.

\bibitem{hora07b}
H.\ Holden and X.\ Raynaud, {\em Global conservative multipeakon solutions of the Camassa--Holm equation}, J.\ Hyperbolic Differ.\ Equ.\ {\bf 4} (2007), no.~1, 39--64.

\bibitem{hora08}
H.\ Holden and X.\ Raynaud, {\em Global dissipative multipeakon solutions of the Camassa--Holm equation}, Comm.\ Partial Differential Equations {\bf 33} (2008), no.~10-12, 2040--2063.

\bibitem{hora09}
H.\ Holden and X.\ Raynaud, {\em Dissipative solutions for the Camassa--Holm equation}, Discrete Contin.\ Dyn.\ Syst.\ {\bf 24} (2009), no.~4, 1047--1112.

\bibitem{hoty12}
O.\ Holtz and M.\ Tyaglov, {\em Structured matrices, continued fractions, and root localization of polynomials}, SIAM Rev.\ {\bf 54} (2012), no.~3, 421--509.

\bibitem{io07}
D.\ Ionescu-Kruse, {\em Variational derivation of the Camassa--Holm shallow water equation}, J.\ Nonlinear Math.\ Phys.\ {\bf 14} (2007), no.~3, 303--312. 

\bibitem{jo02}
R.\ S.\ Johnson, {\em Camassa--Holm, Korteweg--de Vries and related models for water waves}, J.\ Fluid Mech.\ {\bf 455} (2002), 63--82. 

\bibitem{kakr74}
I.\ S.\ Kac and M.\ G.\ Kre\u{\i}n, {\em On the spectral functions of the string}, Amer.\ Math.\ Soc.\ Transl.\ Ser.\ 2, {\bf 103}, 19--102, 1974.

\bibitem{krla79} 
M.\ G.\ Kre\u{\i}n and H.\ Langer, {\em On some extension problems which are closely connected with the theory of Hermitian operators in a space $\Pi_\kappa$. III. Indefinite analogues of the Hamburger and Stieltjes moment problems. Part I.} Beitr\"age Anal.\ No.\ 14 (1979), 25--40.

\bibitem{krla80} 
M.\ G.\ Kre\u{\i}n and H.\ Langer, {\em On some extension problems which are closely connected with the theory of Hermitian operators in a space $\Pi_\kappa$. III. Indefinite analogues of the Hamburger and Stieltjes moment problems. Part II.} Beitr\"age Anal.\ No.\ 15 (1980), 27--45.

\bibitem{lawi98}
H.\ Langer and H.\ Winkler, {\em Direct and inverse spectral problems for generalized strings}, Integral Equations Operator Theory {\bf 30} (1998), no.~4, 409--431.

\bibitem{mc03}
H.\ P.\ McKean, {\em Fredholm determinants and the Camassa--Holm hierarchy}, Comm.\ Pure Appl.\ Math.\ {\bf 56} (2003), no.~5, 638--680. 

\bibitem{mc04}
H.\ P.\ McKean, {\em Breakdown of the Camassa--Holm equation}, Comm.\ Pure Appl.\ Math.\ {\bf 57} (2004), no.~3, 416--418. 

\bibitem{sti}
T.-J.\ Stieltjes, {\em Recherches sur les Fractions Continues}, Ann.\ Fac.\ Sci.\ Toulouse Sci.\ Math.\ Sci.\ Phys.\ {\bf 8} (1894), no.~4, 1--122.

\bibitem{xizh00}
Z.\ Xin and P.\ Zhang, {\em On the weak solutions to a shallow water equation}, Comm.\ Pure Appl.\ Math.\ {\bf 53} (2000), no.~11, 1411--1433.

\end{thebibliography}
\end{document}